\newtheorem{theorem}{Theorem}[section]
\newtheorem{proposition}[theorem]{Proposition}
\newtheorem{lemma}[theorem]{Lemma}
\newtheorem{corollary}[theorem]{Corollary}
\newtheorem{definition}[theorem]{Definition}
\newtheorem{remark}[theorem]{Remark}
\def\F{\mathbf F}
\def\Z{\mathbf Z}
\def\min{{\rm min}}
\newcommand{\aut}{\mbox{\rm Aut}}
\newcommand{\PP}{\mathcal{P}}
\newcommand{\QQ}{\mathcal{Q}}
\newcommand{\RR}{\mathcal{R}}
\newcommand{\cc}{\mathfrak{c}}
\newcommand{\dd}{\mathfrak{d}}
\renewcommand{\PP}{\mathcal{P}}
\renewcommand{\QQ}{\mathcal{Q}}
\renewcommand{\F}{\mathbb{F}}
\renewcommand{\Z}{\mathbb{Z}}
\renewcommand{\aut}{\mathrm{Aut}}
\newcommand{\Zte}{\overline{Z}_3}
\title[On a maximal function field with the third largest possible genus, $q \equiv 0 \pmod 3$]{Weierstrass semigroups and automorphism group of a maximal function field with the third largest possible genus, $q \equiv 0 \pmod 3$}
\date{}
\author[Peter Beelen]{Peter Beelen$^1$} \address{$^1$Department of Applied Mathematics and Computer Science, Technical University of Denmark, Kongens Lyngby 2800, Denmark} \email{pabe@dtu.dk} \thanks{}
\author[Maria Montanucci]{Maria Montanucci$^1$} \address{} \email{marimo@dtu.dk} \thanks{}
\author[Lara Vicino]{Lara Vicino$^2$} \address{$^2$Faculty of Science and Engineering - Bernoulli Institute, University of Groningen, Groningen 9747AG, The Netherlands}  \email{l.vicino@rug.nl} \thanks{}
\begin{document}

\begin{abstract}
In this article we complete the work started in \cite{BMV} and \cite{BMV2}, explicitly determining the Weierstrass semigroup at any place and the full automorphism group of a known $\mathbb{F}_{q^2}$-maximal function field $Z_3$ having the third largest genus, for $q \equiv 0 \pmod 3$. The cases $q \equiv 2 \pmod 3$ and $q \equiv 1 \pmod 3$ have been in fact analyzed in \cite{BMV} and \cite{BMV2}, respectively. As in the other two cases, the function field $Z_3$ arises as a Galois subfield of the Hermitian function field, and its uniqueness (with respect to the value of its genus) is a well-known open problem. Knowing the Weierstrass semigroups may provide a key towards solving this problem. Surprisingly enough, $Z_3$ has many different types of Weierstrass semigroups and the set of its Weierstrass places is much richer
than its set of $\mathbb{F}_{q^2}$-rational places. We show that a similar exceptional behaviour does not occur in terms of
automorphisms, that is, $\aut(Z_3)$ is exactly the automorphism group inherited from the Hermitian function field, apart from the case $q=3$.
\end{abstract}

\maketitle

\thanks{{\em Keywords}: Algebraic function fields, Automorphism groups, Weierstrass semigroups.}

\thanks{{\em Subject classifications}: 14H37, 14H05.}

\section{Introduction}

An algebraic function field $F$ over a finite field with square cardinality is called maximal if the Hasse-Weil upper bound is attained with equality. If $F$ is an algebraic function field of genus $g$ over the finite field $\mathbb{F}_{q^2}$ of cardinality $q^2$ ($q$ prime power),
then the Hasse-Weil upper bound states that
$$N(F) = q^2 + 1 + 2qg,$$
where $N(F)$ denotes the number of places of degree one (we will refer to them also as $\mathbb{F}_{q^2}$-rational places) of
$F$.

$\mathbb{F}_{q^2}$-maximal function fields, in particular those with large genus, have been and still are intensively investigated in the literature.
This is also because of their applications in coding theory and cryptography, due to Goppa’s construction of error-correcting
codes using algebraic function fields (AG codes). Two questions arise naturally in the literature about algebraic function field: \textit{how large can the genus of a maximal function field be? Is it possible to classify (up to isomorphism) all maximal function fields over a finite field of given genus $g$?} 

Some properties about the genera spectrum of maximal function fields are known. It is for example well known that if $g$ denotes the genus of an $\mathbb{F}_{q^2}$-maximal function field  $F$, then $g  \leq  q(q - 1)/2$, see \cite{Y}, and that $g$ reaches this upper limit if and only if $F$ is isomorphic to the Hermitian function field $H_1 := \mathbb{F}_{q^2}(u, v)$ with $v^{q+1} = u^q + u$, see \cite{RS}. 

In \cite{FGT} it has been further proven that either $g \leq \lfloor (q-1)^2/4\rfloor$, or $g = q(q-1)/2$. For $q$ odd, $g = (q-1)^2/4$ occurs if and only if $F$ is isomorphic to the function field $H_2 := \mathbb{F}_{q^2}(x,y)$ with $y^q +y = x^{(q+1)/2}$, see \cite[Theorem 3.1]{FGT}. For $q$ even, a similar result is obtained in \cite{AT}: $g = \lfloor(q - 1)^2/4\rfloor = q(q- 2)/4$ occurs if and only if $F$ is isomorphic to the the function field $H_2 :=\mathbb{F}_{q^2}(x, y)$ with $y^{q/2} + \ldots + y^2 + y = x^{q+1}$.

This means that both the first $g_1 := q(q - 1)/2$ and the second $g_2 := \lfloor (q - 1)^2/4\rfloor$ largest genera of $\mathbb{F}_{q^2}$-maximal
function fields are known, and they are realized by exactly one function field up to isomorphism, but the same
is not known for the third largest genus. In fact, what is known is that its value is $g_3 := \lfloor (q^2 -q +4)/6\rfloor$, but it is still
unclear whether this is realized by exactly one function field up to isomorphism or not, see \cite{KT}. It has been shown in \cite[Remark 3.4]{KT} that an $\mathbb{F}_{q^2}$-maximal function field of genus $g_3$ does
exist, namely
\begin{enumerate}
    \item  $X_3 := \mathbb{F}_{q^2}(x, y)$ with $x^{(q+1)/3} + x^{2(q+1)/3} + y^{q+1} = 0$, if $q \equiv 2 \pmod 3$;
    \item $Y_3 := \mathbb{F}_{q^2}(x, y)$ with $x^q+xy^{(q-1)/3}-y^{(2q+1)/3} = 0$, if $q \equiv 1 \pmod 3$; and
    \item $Z_3 := \mathbb{F}_{q^2}(x, y)$ with $x^q+x+(y+y^3+\cdots+y^{q/3})^2=0$, if $q \equiv 0 \pmod 3$.
\end{enumerate}

All the examples above are Galois subfields of the Hermitian function field $H_1$ and the corresponding Galois group has order three. 
Understanding if a uniqueness result (up to isomorphism) is true also in the case of genus $g_3$, is a famous open problem. 

In the proof of uniqueness for the cases $g_1$ and $g_2$ mentioned above, the so-called Weiestrass semigroups and Weierstrass places play a central role, so it is natural to ask how these behave for the three function fields mentioned above. The Weiestrass points and Weierstrass semigroups at every places of $X_3$ and $Y_3$ have been computed in \cite{BMV} and \cite{BMV2}, respectively. Continuing and completing our investigation, in this paper we compute the Weierstrass semigroup at every place of the function field $Z_3$ having third largest genus $g(Z_3)=g_3$ for $q \equiv 0 \pmod 3$, as well as its set of Weierstrass places and its full automorphism group. 

Doing so, we show that $Z_3$ has a quite large set of non-rational Weierstrass places and many different types of Weierstrass semigroups. The full automorphism group of $Z_3$ is also computed as an application of the results mentioned above.
The paper is organized as follows: we start by giving basic properties of the function field $Z_3$ and use them to compute Weierstrass semigroups for some of the rational places of $Z_3$. Then, in the remainder of the second section, we construct two families of functions that will be used later on. In the third section we compute the Weierstrass semigroups for all places. Finally, in the last section, we characterize all automorphisms of $Z_3$. 

\section{Preliminary results}

In this section, we provide some preliminary results that will be used throughout the paper. In the first subsection, the function field $Z_3$ is presented and some of its initial properties are pointed out. Among other things, some principal divisors are computed, as well as some differentials and the corresponding canonical divisors on $Z_3$. In fact, canonical divisors and differentials are useful for computing gaps, and hence Weierstrass semigroups. Local power series expansions for some useful functions can also be found there. In the second subsection
we instead start our investigation on Weierstrass semigroups of $Z_3$ by computing $H(P)$ at some special places $P$.

\subsection{The function field $Z_3$ and its properties}

Let $q=3^t$ with $t \geq 1$. The function field $Z_3:=\mathbb{F}_{q^2}(x,y)$ is given by the equation 
$$x^q+x+(y+y^3+\cdots+y^{q/3})^2=0.$$
It is a maximal function field of genus $q(q-1)/6$. The function field $Z_3$ can be viewed as a subfield of the Hermitian function field $H_1=\mathbb{F}_{q^2}(u,v)$ where $u^q+u=v^{q+1}$, using the equations $x=-(u+v^2)$ and $y=v^3-v$. With this interpretation, $Z_3$ is the fixed field of the automorphism of $H_1$ defined by $(u,v) \mapsto (u+v+2,v+1).$ In particular, the extension $\mathbb{F}_{q^2}(u,v)/\mathbb{F}_{q^2}(x,y)$ is a Galois extension of degree three. In this extension, there is one totally ramified place of the Hermitian function field, namely the pole of $u$ and $v$ which we denote by $Q_\infty$. We denote the place of $Z_3$ lying below $Q_\infty$ by $P_\infty$. The function field $\Zte:=\overline{\mathbb{F}}_{q^2}Z_3$ is obtained from $Z_3$ by extending the constant field to $\overline{\mathbb{F}}_{q^2}$, the algebraic closure of ${\mathbb{F}}_{q^2}$. Since $P_\infty$ is a rational place of $Z_3$, it gives rise to a unique place of $\Zte$, which we again denote by $P_\infty$. A place $P$ of $\Zte$ distinct from $P_\infty$ does not have a pole in $x$ nor in $y$. In particular, such a place $P$ has a unique $x$- and $y$-coordinate, that is to say $x(P)=a$ and $y(P)=b$  for certain $a,b \in \overline{\mathbb{F}}_{q^2}$ satisfying $a^q+a+(b+b^3+\cdots+b^{q/3})^2=0$. Conversely, any pair $(a,b) \in \overline{\mathbb{F}}_{q^2}^2$ satisfying $a^q+a+(b+b^3+\cdots+b^{q/3})^2=0$ corresponds to a unique place $P$ of $\Zte$ with $x$- and $y$-coordinate $a$ and $b$. We will sometimes denote that place by $P_{(a,b)}$. In case $(a,b) \in \mathbb{F}_{q^2}^2$, we will with slight abuse of notation denote by $P_{(a,b)}$ the rational place of $Z_3$ with $x$-coordinate $a$ and $y$-coordinate $b$. 

It is not hard to compute the divisors of the functions $x$ and $y$. They are as follows:
\begin{equation} \label{divx1}
(x)=2\sum_{p(b)=0} P_{(0,b)}-2m P_\infty,
\end{equation}
where $p(b)=b+b^3+\cdots+b^{q/3}$ and $m=q/3$.
Moreover
\begin{equation} \label{divy1}
(y)=\sum_{a^q+a=0}P_{(a,0)}-q P_\infty.
\end{equation}

For future computations, we will also need a suitable canonical divisor and information about possible gaps of a place. We state these in the next lemma.
\begin{lemma}\label{lem:diffandgaps}
The divisor of the differential $dy$ of $Z_3$ is given by
\begin{equation*} 
(dy)=(2g(Z_3)-2)P_\infty=\frac{q^2-q-6}{3}P_\infty=(m-1)(q+2)P_\infty.
\end{equation*}    
For any function $f \in L((m-1)(q+2)P_\infty)$ and any place $P$ of $\Zte$ different from $P_\infty$, the value $v_P(f) + 1$ is a gap of $P$, that is, $v_P(f) + 1 \in G(P)$.
\end{lemma}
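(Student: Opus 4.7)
The plan is to treat the two assertions separately. The chain of equalities displayed for the coefficient of $P_\infty$ reduces to checking $2g(Z_3)-2=(q^2-q-6)/3=(m-1)(q+2)$, which is immediate from $g(Z_3)=q(q-1)/6$ and $m=q/3$ via the expansion $(q-3)(q+2)=q^2-q-6$, so I would dispose of it in one line at the start.

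For the computation of $(dy)$, my strategy is to show that $v_P(dy)=0$ for every place $P$ of $\Zte$ different from $P_\infty$; combined with $\deg(dy)=2g(Z_3)-2$, this forces the stated identity. To establish the vanishing, I will verify that $y-y(P)$ is a local parameter at each such $P$. By \eqref{divy1}, the pole divisor of $y$ on $\Zte$ has degree $q$, so $[\Zte:\overline{\mathbb{F}}_{q^2}(y)]=q$. For any $b\in\overline{\mathbb{F}}_{q^2}$, the separable additive polynomial $x^q+x+p(b)^2$ has exactly $q$ roots in $\overline{\mathbb{F}}_{q^2}$, producing $q$ distinct places $P_{(a,b)}$ of $\Zte$ lying above the place $y=b$ of $\overline{\mathbb{F}}_{q^2}(y)$. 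The fundamental equality then forces each of these $q$ places to be unramified, so $v_{P_{(a,b)}}(y-b)=1$, and hence $v_{P_{(a,b)}}(dy)=0$.

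For the gap assertion, I will invoke the standard Riemann--Roch characterization: an integer $n\geq 1$ lies in $G(P)$ if and only if there exists a regular differential $\omega$ with $v_P(\omega)=n-1$. Given $f\in L((m-1)(q+2)P_\infty)$, the divisor $(f\,dy)=(f)+(m-1)(q+2)P_\infty$ is effective, since by hypothesis $v_P(f)\geq 0$ for every $P\neq P_\infty$ and $v_{P_\infty}(f)\geq -(m-1)(q+2)$. Thus $f\,dy$ is a regular differential, and with $n:=v_P(f)+1\geq 1$ we have $v_P(f\,dy)=v_P(f)=n-1$, whence $n\in G(P)$.

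The only slightly delicate point is the local-parameter argument in the first part, which rests on the bijection (spelled out in the text) between places of $\Zte\setminus\{P_\infty\}$ and pairs $(a,b)\in\overline{\mathbb{F}}_{q^2}^{\,2}$ satisfying the defining equation of $Z_3$; once this is in place, both statements follow from routine manipulations with Riemann--Roch.
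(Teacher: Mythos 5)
Your proof is correct, and while the second half (the gap criterion via holomorphic differentials $f\,dy$) is essentially identical to the paper's argument, your computation of $(dy)$ takes a genuinely different route. The paper works downstairs from the Hermitian function field: since $y=v^3-v$ gives $dy=-dv$ and $(dv)_{H_1}=(q(q-1)-2)Q_\infty$, the transformation formula for differentials in the degree-three Galois extension $H_1/Z_3$ (ramified only at $Q_\infty$) forces $(dy)_{Z_3}$ to be supported at $P_\infty$ alone, whence the coefficient is $2g(Z_3)-2$ by degree count. You instead work intrinsically with the plane model: from \eqref{divy1} the extension $\Zte/\overline{\mathbb{F}}_{q^2}(y)$ has degree $q$, and for each $b$ the separability of $x^q+x+p(b)^2$ in $x$ yields $q$ distinct places above $y=b$, so the fundamental equality forces $e=1$ everywhere, $y-b$ is a local parameter, and $v_P(dy)=0$ for all $P\neq P_\infty$; the degree count then finishes as before. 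Your argument is more self-contained (it never invokes the Hermitian cover or the different), at the cost of re-deriving the unramifiedness of $y$ that the paper gets for free from the already-established ramification structure of $H_1/Z_3$; the paper's route is shorter given that setup and generalizes immediately to any differential pulled back from the Hermitian field. One small point worth making explicit in your write-up: every place of $\Zte$ other than $P_\infty$ lies over some finite place $y=b$ (since $qP_\infty$ is the full pole divisor of $y$), so your local-parameter argument really does cover all places in question.
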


\begin{proof}
Denote as before by $H_1=\mathbb{F}_{q^2}(u,v)$ the Hermitian function field where $u^q+u=v^{q+1}$. As recalled previously, the extension $\mathbb{F}_{q^2}(u,v)/\mathbb{F}_{q^2}(x,y)$ is a Galois extension with only one ramified place, which is totally ramified, namely the common pole of $u$ and $v$ which we denote by $Q_\infty$ (we denote as before with $P_\infty$ the place below $Q_\infty$ in this Galois extension). From \cite[Corollary 3.4.7]{Sti} the differential $dv$ on $H_1$ has divisor $(dv)=(q(q-1)-2)Q_\infty$. From \cite[Theorem 3.4.6]{Sti} only $P_\infty$ can be in the support of $(dy)_{Z_3}$ and since this divisor is canonical, it has degree $2g(Z_3)-2$. Hence $(dy)_{Z_3}=(2g(Z_3)-2)P_\infty=(m-1)(q+2)P_\infty$.

The same argument shows $(dy)_{\Zte}=(m-1)(q+2)P_\infty$. This shows that, for any $f \in L((m-1)(q+2)P_\infty)$, the differential $fdy$ is holomorphic, that is, it has no poles. Then by \cite[Corollary 14.2.5]{VS} the integer $v_P(fdy) + 1 \in G(P)$, for any place $P$ of $\Zte$ different from $P_\infty$, as claimed.
\end{proof}

Finally, we recall the Fundamental Equation mentioned in for example \cite[Page xix (ii)]{HKT}, which implies that there exists for any place $P$ of $\Zte$ a function $F_P$ in $\Zte$ such that 
\begin{equation} \label{fundeq1}
(F_P)=qP+\Phi(P)-(q+1)P_\infty,
\end{equation}
where $\Phi$ denotes the $q^2$-th power map, also known as the Frobenius map. Note that in case $P$ is $\mathbb{F}_{q^2}$-rational, one can find a function $F_P \in Z_3$ such that $(F_P)=(q+1)(P-P_\infty).$

A very important ingredient in computing Weierstrass semigroups are local power series at specific places.
We define 
\begin{equation*} 
\beta(P):=\begin{cases}
    (b+b^3+\cdots+b^{q/3})^2=-(a^q+a) & \quad \mbox{if} \quad P=P_{(a,b)}, \ \mbox{on} \ \Zte\\
    \infty & \quad \mbox{if} \quad P = P_\infty.
\end{cases}
\end{equation*}
Further, if $P_{(a,b)}$ lies below a place $Q_{(A,B)}$ of the Hermitian function field, then we define $T:=(v-B)/(B^q-B).$ Here we assume that $B^q-B \neq 0$. In fact note that $B^q-B=b+b^3+\cdots+b^{q/3}=p(b)$, since $b=B^3-B$. The case $p(b)=0$ will be analyzed in Theorem \ref{semp00}, without the use of power series computations. Note that $T$ is a local parameter for the place $Q_{(A,B)}$. Directly from the defining equation of the Hermitian function field, one obtains that $u-A=B^q(v-B)+O(T^q)$, see also \cite[Page 4]{BMV2}. Here, for $n \in \Z$, the expression $O(T^n)$ denotes a function of the Hermitian function field with valuation at least $n$ at $Q_{(A,B)}$. We define $x_a:=-(x-a)/\beta(P)$ and $y_b:=-(y-b)/(B^q-B)$. Then 
\begin{eqnarray}\label{eq:xa_powerseries}
x_a & = & \frac{u+v^2-A-B^2}{\beta(P)} = \frac{(u-A)+(v-B)^2+2B(v-B)}{\beta(P)} \notag\\
 & = & \frac{(B^q-B)(v-B)+(v-B)^2}{(B^q-B)^2}+O(T^q)=T+T^2+O(T^q)
\end{eqnarray}
and
\begin{eqnarray}\label{eq:yb_powerseries}
y_b & = & \frac{v-v^3-B+B^3}{B^q-B} = \frac{(v-B)-(v-B)^3}{B^q-B} =  T-\beta(P) T^3+O(T^q).
\end{eqnarray}

For future use, let us determine the possible values of $\beta(P)$ for the rational places $P$ of $Z_3$. 

\begin{lemma}\label{lem:beta_rational}
Let $P$ be a rational place of $Z_3$. Then either $\beta(P) \in \{0,1,\infty\}$ or $\beta(P)^{(q-1)/2}=-1$. Conversely, if $P$ is a place of $Z_3$ such that $\beta(P) \in \{0,1,\infty\}$ or $\beta(P)^{(q-1)/2}=-1$, then $P$ is a rational place.
\end{lemma}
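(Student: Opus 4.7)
The plan is centered on the telescoping identity
\begin{equation*}
p(b)^3-p(b)=b^q-b,
\end{equation*}
which is immediate in characteristic three from $p(b)^3=b^3+b^9+\cdots+b^q$. This identity converts statements about $p(b)$ into statements about $b^q-b$, and vice versa, and will do most of the heavy lifting in both directions.

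For the forward direction, I would observe that if $P=P_{(a,b)}$ is rational, then $a\in\mathbb{F}_{q^2}$ makes $a^q+a$ a trace in $\mathbb{F}_q$, so $\beta(P)=p(b)^2\in\mathbb{F}_q$, whence $p(b)^q=\varepsilon p(b)$ for some $\varepsilon\in\{\pm 1\}$. If $\varepsilon=-1$ and $p(b)\neq 0$, then immediately $\beta(P)^{(q-1)/2}=p(b)^{q-1}=-1$. Otherwise $p(b)\in\mathbb{F}_q$, so the identity forces $b^q-b\in\mathbb{F}_q$; but $b\in\mathbb{F}_{q^2}$ already gives $(b^q-b)^q=-(b^q-b)$, which combined with characteristic three collapses to $b\in\mathbb{F}_q$. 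Then the identity reduces to $p(b)^3=p(b)$, so $p(b)\in\mathbb{F}_3$ and $\beta(P)\in\{0,1\}$.

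For the converse, I would split on the value of $\beta(P)$. In the easy cases $\beta(P)\in\{0,1\}$, the identity immediately yields $b\in\mathbb{F}_q$, after which the surjectivity of the trace $\mathbb{F}_{q^2}\to\mathbb{F}_q$ (whose full $q$-element fibre over any element of $\mathbb{F}_q$ lies inside $\mathbb{F}_{q^2}$) places $a\in\mathbb{F}_{q^2}$. The real work will be the case $\beta(P)^{(q-1)/2}=-1$: I would set $\gamma:=p(b)$, obtain $\gamma^q=-\gamma$, hence $\gamma\in\mathbb{F}_{q^2}$ and $\beta(P)=\gamma^2\in\mathbb{F}_q$. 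Applying Frobenius to the identity then gives
\begin{equation*}
(b^q-b)^q=\gamma^{3q}-\gamma^q=-(\gamma^3-\gamma)=-(b^q-b),
\end{equation*}
which forces $b^{q^2}=b$, so $b\in\mathbb{F}_{q^2}$; the trace argument on $a^q+a=-\beta(P)\in\mathbb{F}_q$ then finishes the job.

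The main obstacle I foresee is the subtle strengthening in the forward direction when $p(b)\in\mathbb{F}_q$: one must pass from the seemingly weak conclusion $b^q-b\in\mathbb{F}_q$ to $b\in\mathbb{F}_q$, which collapses $p(b)$ into the tiny set $\mathbb{F}_3$ and gives $\beta(P)\in\{0,1\}$ rather than the much larger set of squares in $\mathbb{F}_q^*$. The characteristic-three identity $(b^q-b)^q=-(b^q-b)$ following from $b\in\mathbb{F}_{q^2}$ is precisely what makes this collapse work.
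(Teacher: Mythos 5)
Your proposal is correct, and its converse direction takes a genuinely different route from the paper. The forward direction essentially coincides with the paper's: the dichotomy $p(b)^q=\pm p(b)$ obtained from $\beta(P)=-(a^q+a)\in\mathbb{F}_q$, and the collapse of the case $p(b)^q=p(b)$ to $b\in\mathbb{F}_q$ via $(b^q-b)^q=-(b^q-b)$, are exactly the paper's steps (your identity $p(b)^3-p(b)=b^q-b$ is what underlies the paper's claim that $p(b^q-b)=0$ forces $b^q-b\in\mathbb{F}_q$). For the converse with $\beta(P)^{(q-1)/2}=-1$, however, the paper argues globally: it sets up the degree-$2q^2/3$ Galois extension $Z_3/\mathbb{F}_{q^2}(\beta)$, counts the $1+q^2$ rational places with $\beta\in\{0,1,\infty\}$, uses maximality to conclude that exactly $(q-1)/2\cdot 2q^2/3$ rational places remain, deduces by pigeonhole that every fibre of $\beta$ over a $\gamma$ with $\gamma^{(q-1)/2}=-1$ contains a rational place, and finally uses Galoisness to upgrade this to all places in the fibre being rational. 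You instead prove rationality place by place, showing directly that the coordinates lie in $\mathbb{F}_{q^2}$: from $\gamma^q=-\gamma$ you get $(b^q-b)^q=\gamma^{3q}-\gamma^q=-(b^q-b)$, hence $b^{q^2}=b$, and then $a\in\mathbb{F}_{q^2}$ because all $q$ roots of $z^q+z=c$ with $c\in\mathbb{F}_q$ lie in $\mathbb{F}_{q^2}$. Your argument is more elementary and self-contained, using neither the maximality of $Z_3$ nor the Galois structure of $Z_3/\mathbb{F}_{q^2}(\beta)$; the paper's argument, on the other hand, introduces the group $G$ and the extension $Z_3/\mathbb{F}_{q^2}(\beta)$ and records the fibre sizes of $\beta$, all of which are reused later (in the computation of $H(P_{(a,b)})$ for $\beta(P)=0$, in the count of rational places of each $\mathcal{P}$-order, and in the determination of the automorphism group). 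Both proofs are complete.
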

\begin{proof}
Consider the group $$G:=\{(x,y)\mapsto (x+a,\pm y+b) \mid a^q+a=0, \quad p(b)=0\}$$
of automorphisms of $Z_3$. It is a group of order $2q^3/3$. Its fixed field clearly contains $\beta=-(x^q+x)=p(y)^2$. Conversely, 
$$[Z_3:\mathbb{F}_{q^2}(\beta)]=[Z_3:\mathbb{F}_{q^2}(x)][\mathbb{F}_{q^2}(x):\mathbb{F}_{q^2}(\beta)]=2q/3 \cdot q=2q^2/3.$$ Hence the fixed field of $G$ equals $\mathbb{F}_{q^2}(\beta)$ and the extension $Z_3/\mathbb{F}_{q^2}(\beta)$ is a Galois extension of degree $2q^2/3$. 

All automorphisms in $G$ fix $P_\infty$ and therefore $P_\infty$ is the only place of $Z_3$ lying above the pole of $\beta$. There are exactly $q^2/3$ places lying above the zero of $\beta$, namely the rational places $P_{(a,b)}$, where $a^q+a=p(b)=0$. Note that indeed all solutions to the equation $p(b)=0$ are in $\mathbb{F}_{q^2}$, actually even in $\mathbb{F}_q$, since $p(b)$ gives the trace from $\mathbb{F}_q$ to $\mathbb{F}_3$. Finally, note that there are exactly $2q^2/3$ places lying above the zero of $\beta-1$, namely the rational places $P_{(a,b)}$, where $a^q+a=1$ and $p(b)=\pm 1$.

So far we have shown the lemma in case $\beta(P) \in \{0,1,\infty\}$. As we have seen, the number of possibilities for such $P$ equals $1+q^2/3+2q^2/3=1+q^2$.
Since $Z_3$ is maximal, it has precisely $1+q^2+(q^3-q^2)/6$ many rational places. We claim that the lemma follows if we can show that, for any of the $(q^3-q^2)/6=(q-1)/2 \cdot 2q^2/3$ rational places not analyzed thus far, it holds that $\beta(P)^{(q-1)/2}=-1$. Indeed, this would directly show the first part of the lemma. Moreover, since at most $2q^2/3$ places $P$ can have the same value $\beta(P)$, this would also show that for each $\gamma$ satisfying $\gamma^{(q-1)/2}=-1$, there exists at least one rational place of $Z_3$ such that $\beta(P)=\gamma$. Since $Z_3/\mathbb{F}_{q^2}(\beta)$ is a Galois extension this implies that any place $P$ such that $\beta(P)=\gamma$ is rational. 

Now let $P=P_{(a,b)}$ be a rational place of $Z_3$ such that $\beta(P) \not\in \{0,1,\infty\}$. In particular, this implies that $p(b) \not \in \mathbb{F}_3$. Since $\beta(P)=-(a^q+a)$, we see $\beta(P) \in \mathbb{F}_q$ and therefore $p(b^q)^2=-(a+a^q)^q=-(a+a^q)=p(b)^2$, which implies that either $p(b^q)=p(b)$ or $p(b^q)=-p(b)$. 

In the first case, $p(b^q)=p(b)$, we obtain that $p(b^q-b)=0$ and hence $b^q-b \in \mathbb{F}_q$. On the other hand, $(b^q-b)^q=b^{q^2}-b^q=b-b^q=-(b^q-b)$. Hence this case can only occur if $b^q-b=0$, implying that $b \in \mathbb{F}_q$. This immediately implies that $p(b) \in \mathbb{F}_3$. The first case has therefore already been analyzed before and needs not be considered here.

In the second case, $p(b^q)=-p(b)$, we obtain that $b+b^3+\cdots+b^{q^2/3}=0$. Since the left-hand side of this equation is the trace from $\mathbb{F}_{q^2}$ to $\mathbb{F}_3$, it has $q^2/3$ many solutions in $\mathbb{F}_{q^2}$, including the $q/3$ solutions to the equation $p(b)=0$. Disregarding these $q/3$ solutions, we see that $\beta(P)^{(q-1)/2}=p(b)^{q-1}=-1.$ This is what was left to show. 
\end{proof}
In the next section we compute the Weierstrass semigroups at rational places $P$ of $Z_3$ such that $\beta(P) \in \{0,1,\infty\}$.

\subsection{Computation of $H(P)$ for some special places $P$ of $Z_3$}
Our first aim is to compute the Weierstrass semigroups at the places $P$ of $Z_3$ satisfying $\beta(P) \in \{0,1,\infty\}$. We start with the common pole of $x$ and $y$, which we denoted in the previous section by $P_\infty$.

\begin{theorem} \label{sempinf}
The Weierstrass semigroup at $P_\infty$ is as follows: $$H(P_\infty)=\langle 2q/3,q,q+1 \rangle.$$
\end{theorem}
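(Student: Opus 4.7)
The strategy is standard: exhibit explicit functions in $Z_3$ realizing the three proposed generators as pole orders at $P_\infty$, then count the gaps of the numerical semigroup $\langle 2q/3,\, q,\, q+1\rangle$ and show this count matches $g(Z_3)=q(q-1)/6$, forcing equality with $H(P_\infty)$.

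From \eqref{divx1} we read off $v_{P_\infty}(x)=-2q/3$, hence $2q/3\in H(P_\infty)$; likewise \eqref{divy1} gives $v_{P_\infty}(y)=-q$, so $q\in H(P_\infty)$. To realise the generator $q+1$, I apply the Fundamental Equation \eqref{fundeq1} to any $\mathbb{F}_{q^2}$-rational place $P\neq P_\infty$ (for instance $P=P_{(0,0)}$, which lies on $Z_3$ since $(a,b)=(0,0)$ solves the defining equation). Because $P$ is rational we have $\Phi(P)=P$, and \eqref{fundeq1} becomes $(F_P)=(q+1)(P-P_\infty)$, giving $v_{P_\infty}(F_P)=-(q+1)$ and so $q+1\in H(P_\infty)$. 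At this point $\langle 2q/3,q,q+1\rangle\subseteq H(P_\infty)$.

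For the gap count, set $m=q/3$ so the candidate semigroup is $S:=\langle 2m,3m,3m+1\rangle$. Every element of $S$ has the form $c(3m+1)+r$ with $c\geq 0$ and $r\in\langle 2m,3m\rangle=\{0,2m,3m,4m,\ldots\}$, and necessarily $c\equiv n\pmod{m}$. Writing $n=c_0+km$ with $c_0\in\{0,\ldots,m-1\}$ and testing $c\in\{c_0,\,c_0+m,\,c_0+2m,\ldots\}$, a direct case analysis yields that $n\in S$ iff $k\geq 3c_0$ and $k\neq 3c_0+1$: the choice $c=c_0$ gives remainder $m(k-3c_0)$, while any choice $c\geq c_0+m$ already requires $k\geq 3c_0+3m+1$, which rules out recovering the borderline exception $k=3c_0+1$. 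Thus each residue class $c_0$ contributes exactly $3c_0+1$ gaps (the values $k=0,1,\ldots,3c_0-1$ together with $k=3c_0+1$), and summing gives
$$\sum_{c_0=0}^{m-1}(3c_0+1)=\frac{3m(m-1)}{2}+m=\frac{m(3m-1)}{2}=\frac{q(q-1)}{6}=g(Z_3).$$

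Since $H(P_\infty)$ has precisely $g(Z_3)$ gaps and $S\subseteq H(P_\infty)$, equal gap counts force $S=H(P_\infty)$. The only step requiring genuine care is the gap enumeration, specifically checking that no larger value of $c$ recovers the apparent exception $k=3c_0+1$; this is the main (and essentially only) technical point, everything else being a direct reading off of divisors already computed in the paper.
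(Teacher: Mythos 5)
Your proof is correct, and it differs from the paper's in one substantive way: where the paper establishes the inclusion $\langle 2q/3,q,q+1\rangle\subseteq H(P_\infty)$ exactly as you do (maximality, resp.\ the divisors \eqref{divx1} and \eqref{fundeq1}) and then outsources the gap count to a result of Fuhrmann's thesis \cite{Fuhrmann_1995} on semigroups of the form $\langle a,q,q+1\rangle$, you carry out the gap enumeration of $S=\langle 2m,3m,3m+1\rangle$ from scratch. Your enumeration is sound: writing $n=c_0+km$, the representation $n=c(3m+1)+r$ with $r\in\{0\}\cup\{2m,3m,\dots\}$ forces $c\equiv c_0\pmod m$, the choice $c=c_0$ leaves remainder $(k-3c_0)m$ which lies in $\langle 2m,3m\rangle$ precisely when $k=3c_0$ or $k\ge 3c_0+2$, and any $c\ge c_0+m$ only reaches $k\ge 3c_0+3m+1$, so the exceptional value $k=3c_0+1$ is never rescued; the resulting count $\sum_{c_0=0}^{m-1}(3c_0+1)=m(3m-1)/2=q(q-1)/6=g(Z_3)$ is right, and together with the inclusion this forces equality. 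What your route buys is a self-contained proof that avoids citing an unpublished thesis (or its paraphrase in the literature); what it costs is a page of elementary but fiddly casework that the citation would have absorbed. One cosmetic point: in the sentence ``necessarily $c\equiv n\pmod m$'' the symbol $n$ appears before it is introduced; declare $n$ as the candidate element of $S$ first.
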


\begin{proof}
Since $Z_3$ is maximal, we know that $q$ and $q+1$ are in $H(P_\infty)$. From Equation \eqref{divx1}, we conclude that $2q/3\in H(P_\infty)$. Let us denote by $H=\langle 2q/3,q,q+1 \rangle$ the numerical semigroup generated by $2q/3,q,q+1$. Since $H \subseteq H(P_\infty)$, the number of gaps of $H$, which we denote by $g$, is at least $g(Z_3)$. Therefore it is enough to show that $g \le g(Z_3)$. However, this is a direct consequence of \cite[Korollar A.2.3]{Fuhrmann_1995}, where semigroups of the form $\langle a,q,q+1 \rangle$ were studied. Also, see \cite[Lemma 3.4]{CKT}, where some results from \cite{Fuhrmann_1995} were paraphrased. 
\end{proof}

We now wish to proceed with analyzing the Weierstrass semigroup at places satisfying $\beta(P)=0$. These places are precisely the zeros of the function $p(y)$ (equivalently $x^q+x$).

\begin{theorem} \label{semp00}
Let $a,b \in \mathbb{F}_{q^2}$ satisfy $a^q+a=0$ and $p(b)=0$. Then 
$H(P_{(a,b)})=\langle q,q+1,q-1,2q-4\rangle$. 
\end{theorem}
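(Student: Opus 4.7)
The plan is to reduce to the case $a=b=0$, to construct four explicit functions realizing $q+1$, $q$, $q-1$, $2q-4$ as pole orders at $P_0:=P_{(0,0)}$, and to conclude by matching the gap counts of the candidate numerical semigroup $\tilde H:=\langle q-1,q,q+1,2q-4\rangle$ and $H(P_0)$.

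For the reduction, since $a^q+a=0$ and $p(b)=0$, the additivity of $p$ and the defining equation imply that $(x,y)\mapsto(x+a,y+b)$ is an automorphism of $Z_3$ that fixes $P_\infty$ and sends $P_{(0,0)}$ to $P_{(a,b)}$, so $H(P_{(a,b)})=H(P_0)$. For the four generators, apply the Fundamental Equation \eqref{fundeq1} at the rational place $P_0$ to obtain $F:=F_{P_0}\in Z_3$ with $(F)=(q+1)(P_0-P_\infty)$; then $1/F$ has its unique pole at $P_0$, of order $q+1$. Using \eqref{divx1}, \eqref{divy1} and the identity $6m=2q$, a direct divisor computation gives
\begin{align*}
(y/F)&=-qP_0+\!\!\sum_{\substack{a^q+a=0\\ a\neq 0}}\!\!P_{(a,0)}+P_\infty,\\
(x/F)&=-(q-1)P_0+2\!\!\sum_{\substack{p(b)=0\\ b\neq 0}}\!\!P_{(0,b)}+(m+1)P_\infty,\\
(x^3/F^2)&=-(2q-4)P_0+6\!\!\sum_{\substack{p(b)=0\\ b\neq 0}}\!\!P_{(0,b)}+2P_\infty,
\end{align*}
so each of these functions has its only pole at $P_0$ of the displayed order, yielding $\tilde H\subseteq H(P_0)$.

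It then suffices to prove $|\Z_{\ge 0}\setminus\tilde H|=g(Z_3)=q(q-1)/6$, since $|G(P_0)|=g(Z_3)$ by the Weierstrass gap theorem. I would compute the Ap\'ery set of $\tilde H$ with respect to $q$: writing an element of $\tilde H$ in residue class $-k\pmod q$ as $a(q-1)+c(q+1)+d(2q-4)+bq$, a case analysis shows that the minimum is always attained by one of the two representations $(a,c,d)=(0,q-k,0)$ or $(a,c,d)=(k\bmod 4,\,0,\,\lfloor k/4\rfloor)$ with $b=0$, so that
\[
w_{q-k}=\min\bigl\{(q-k)(q+1),\ \mu(k)\,q-k\bigr\},\qquad \mu(k):=2\lfloor k/4\rfloor+(k\bmod 4).
\]
Combining the standard formula $|\Z_{\ge 0}\setminus\tilde H|=\tfrac{1}{q}\sum_{r=0}^{q-1}w_r-\tfrac{q-1}{2}$ with the crossover $\mu(k)+k=q+1$ (below which the second branch wins, above which the first) and summing piecewise then gives $\sum_r w_r=q(q-1)(q+3)/6$ and therefore $|\Z_{\ge 0}\setminus\tilde H|=q(q-1)/6$, as required. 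The main obstacle is this arithmetic step: verifying that no more elaborate combination of all four generators produces a smaller representative for any residue class, and then performing the summation, which is piecewise in both $k\bmod 4$ and the position of $k$ relative to the crossover.
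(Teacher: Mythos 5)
Your proposal is correct, and the first two steps (reduction to $P_{(0,0)}$ via the translation automorphism $(x,y)\mapsto(x+a,y+b)$, and the four functions $1/F$, $y/F$, $x/F$, $x^3/F^2$ realizing $q+1$, $q$, $q-1$, $2q-4$) coincide exactly with the paper's argument; your divisor computations check out. Where you diverge is the combinatorial step. The paper does not count the gaps of $\langle q-1,q,q+1,2q-4\rangle$ in isolation: it observes that this semigroup is the member $i=1$ of the family $H_i=\langle q,q+1,q-1+j(q-2),(i+1)(q-2)\mid 0\le j<i\rangle$ treated in Theorem \ref{thm:rat}, and there it proves the bound $\#(\mathbb{Z}_{\ge0}\setminus H_i)\le g(Z_3)$ for all admissible $i$ at once, by exhibiting explicit intervals $\{(k(i+1)+j)(q-2)+1,\dots,(k(i+1)+j)(q+1)\}$ inside $H_i$ and counting the complementary runs $(k(i+1)+(j-1))(q+1)+1,\dots,(k(i+1)+j)(q-2)$, which total $\sum_{\ell=0}^{m-1}(q-2-3\ell)=q(q-1)/6$. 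Your route instead computes the Ap\'ery set of $\langle q-1,q,q+1,2q-4\rangle$ with respect to $q$ and applies Selmer's formula. I verified your claim $w_{q-k}=\min\{(q-k)(q+1),\,\mu(k)q-k\}$: writing a generic element with $b=0$ as $(a+c+2d+n)q-k$ where $c-a-4d=-k+nq$, the cases $n=0$ and $n=1$ yield exactly your two branches (the $n=0$ minimization of $2a+6d-k$ subject to $a+4d\ge k$ gives $\mu(k)$), and $n\le-1$ or $n\ge2$ are strictly worse for $q\ge 9$; the resulting sum $q(q-1)(q+3)/6$ and genus $q(q-1)/6$ check out (e.g.\ for $q=9$ the Ap\'ery sum is $144$ and the genus is $12$). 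What each approach buys: yours gives the exact genus of the numerical semigroup by a self-contained, purely arithmetic computation specific to this one semigroup, at the cost of the case analysis you flag as the main obstacle; the paper's interval argument is only an upper bound on the number of gaps (which suffices, since $\#G(P)=g(Z_3)$ forces equality), but it is uniform in $i$ and so disposes of Theorems \ref{semp00} and \ref{thm:rat} simultaneously, which is why the paper defers the count. Both are complete proofs.
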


\begin{proof}
Let $G$ be the group of automorphisms of $Z_3$ that was introduced in the proof of Lemma \ref{lem:beta_rational}. The group $G$ acts on the set of rational places of $Z_3$, and under this action $P_{(0,0)}$ and $P_{(a,b)}$ lie in the same orbit. Hence it is enough to prove the theorem in case $a=b=0$.

By Equation \eqref{fundeq1}, there exists a function $F$ such that $(F)=(q+1)(P_{(0,0)}-P_\infty)$. Now considering the functions $x/F,y/F,1/F$ and $x^3/F^2$ and using Equations \eqref{divx1} and \eqref{divy1}, we see that $q-1$, $q$, $q+1$ and $2q-4$ are in $H(P_{(0,0)})$. 

To prove the theorem, it is enough to show that the semigroup $\langle q-1,q,q+1,2q-4\rangle$ has at most $g(Z_3)$ many gaps. Later, in Theorem \ref{thm:rat}, we will show this for a class of semigroups including the present one, which can be obtained by putting $i=1$ in Theorem \ref{thm:rat}. Therefore we will postpone the proof till then to avoid unnecessary repetition of arguments. 
\end{proof}

In the following theorem, we now determine the Weierstrass semigroup $H(P)$ in the case $\beta(P)=1$. 

\begin{theorem} \label{sembeta1}
Let $P$ be a rational place of $Z_3$ satisfying $\beta(P)=1$. Then 
\begin{equation*}
    H(P) = \langle q, q+1, (q-1) +j(q-2) \mid j=0,\ldots, m-1\rangle.
\end{equation*}
\end{theorem}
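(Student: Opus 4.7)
The proof naturally splits into two parts: establish the inclusion $H^* := \langle q, q+1, (q-1)+j(q-2) \mid j=0,\ldots,m-1\rangle \subseteq H(P)$ by exhibiting functions in $Z_3$ with the required pole divisors, and then verify that $|G(H^*)| \le g(Z_3) = q(q-1)/6$, which combined with $|G(P)| = g(Z_3)$ forces $H^* = H(P)$.

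For the simpler generators I would argue as follows. The integer $q+1$ lies in $H(P)$ by \eqref{fundeq1}: there exists $F_P \in Z_3$ with $(F_P) = (q+1)(P-P_\infty)$, so $1/F_P$ has pole divisor $(q+1)P$. For $q$, the function $(y-b)/F_P$ works: by \eqref{divy1}, $y-b$ has pole divisor $qP_\infty$, and by \eqref{eq:yb_powerseries} together with the fact that $H_1/Z_3$ is unramified at $P$, $v_P(y-b)=1$. For $s_0 = q-1$, set $\phi := (x-a)-p(b)(y-b) \in Z_3$; combining \eqref{eq:xa_powerseries}, \eqref{eq:yb_powerseries} with the identity $\beta(P)=p(b)^2=1$ one reads off $v_P(\phi)=2$, while the pole of $(y-b)$ at $P_\infty$ (of order $q > 2m$) dominates that of $(x-a)$, so $v_{P_\infty}(\phi) = -q$. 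Consequently $\phi/F_P$ has pole divisor $(q-1)P$.

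The hard part will be producing, for each $j \in \{1,\ldots,m-1\}$, a function $\Psi_j \in Z_3$ with pole divisor exactly $s_j P$ where $s_j = (q-1)+j(q-2)$. The naive candidate $\phi^{j+1} y_b^j / F_P^{j+1}$ has $v_P = -s_j$ as wanted, but carries a spurious pole of order $j(q-1)-1$ at $P_\infty$ once $j \ge 1$. Since $j(q-1)-1$ is typically not in $H(P_\infty) = \langle 2q/3, q, q+1\rangle$, this extra pole cannot be cancelled by a function with pole only at $P_\infty$; the cancellation must come from polynomial corrections in $x_a, y_b$ (and in $F_P$) whose extra zeros and poles recombine with $\phi^{j+1} y_b^j/F_P^{j+1}$ to leave only the pole at $P$. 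The coefficients of these corrections are pinned down by the exact expansions $x_a = T+T^2+O(T^q)$ and $y_b = T - T^3$, together with the further terms of $u-A$ computed from the Hermitian equation $u^q+u=v^{q+1}$ at $Q$. Some case analysis in $j$ is expected.

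For the gap-counting step I would work with the Apéry set of $H^*$ modulo $q$. The sub-semigroup $\langle q-1,q,q+1\rangle$ has Apéry element $\min(d(q+1),(q-d)(q-1))$ at residue $d$ and total genus $(q-1)^2/4$. Adding $s_j$ (which has residue $q-2j-1$ modulo $q$) strictly lowers the Apéry element at that residue to $s_j$: the improvement erases $j$ gaps when $j \le (q-3)/4$ and $q-3j-1$ gaps when $j \ge (q-1)/4$. Further reductions occur at odd-residue classes through elements such as $s_j + k(q+1)$ and $s_{j_1} + s_{j_2}$. A careful bookkeeping of all such reductions—of the same combinatorial flavour as the one announced for Theorem~\ref{thm:rat}, which is said to cover a class of semigroups containing the present one—produces a total reduction of $(q-1)(q-3)/12$, yielding exactly $(q-1)^2/4 - (q-1)(q-3)/12 = q(q-1)/6$ gaps, as required. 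Together with $H^* \subseteq H(P)$ and $|G(P)| = g(Z_3)$, this closes the argument.
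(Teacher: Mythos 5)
Your overall strategy (exhibit generators, then bound the number of gaps by $g(Z_3)$) is the same as the paper's, and your treatment of $q$, $q+1$ and $q-1$ is correct: your $\phi=(x-a)-p(b)(y-b)$ is, up to a nonzero constant, the paper's $h_0=x_a-y_b$, and $\phi/F_P$ indeed realizes $q-1$. However, the proposal has a genuine gap at precisely the point you yourself flag as ``the hard part'': for $j\ge 1$ you correctly observe that the naive candidate $\phi^{j+1}y_b^j/F_P^{j+1}$ carries a spurious pole of order $j(q-1)-1$ at $P_\infty$ and that polynomial corrections are needed, but you do not produce them, nor any mechanism guaranteeing they exist. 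This is the core difficulty, and it is not routine: the paper resolves it by a specific recursion, $h_0:=x_a-y_b$, $h_1:=-x_a+y_b+(x_a+y_b)^2$ and $h_j:=(y_b^2-x_a^2)h_{j-2}-h_{j-1}$ for $j\ge2$, which by construction keeps $h_j\in L((j+1)qP_\infty)$ (so that $h_j/F_P^{j+1}$ has no pole at $P_\infty$) while an easy induction on the local expansions, using $\beta(P)=1$ so that $x_a=T+T^2+O(T^q)$ and $y_b=T-T^3+O(T^q)$, gives $h_j=T^{3j+2}+T^{3j+3}+O(T^q)$ and hence pole order exactly $(q-1)+j(q-2)$ at $P$. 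Without exhibiting such functions (or proving their existence by some other argument), the inclusion $H^*\subseteq H(P)$ is not established for $j\ge1$.

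The gap-counting step is also only asserted, not proved: you state that an Ap\'ery-set bookkeeping ``produces a total reduction of $(q-1)(q-3)/12$'' without carrying it out, and you misattribute the count to Theorem~\ref{thm:rat} (that theorem in fact cites Theorem~\ref{sembeta1} for this particular semigroup, not the other way around). The count is actually much easier than your sketch suggests and needs no Ap\'ery analysis: from the generators one sees directly that the gaps of $H^*$ are confined to the intervals $\{j(q+1)+1,\dots,(j+1)(q-2)\}$ for $j=0,\dots,m-1$, whose total length is $\sum_{k=0}^{m-1}(q-2-3k)=\frac{q^2-q}{6}=g(Z_3)$. I would recommend replacing the Ap\'ery sketch by this direct interval count, and, more importantly, supplying an explicit construction of the functions realizing $(q-1)+j(q-2)$ for $1\le j\le m-1$.
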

\begin{proof}
Let $P$ satisfy $\beta(P)=1$. Since $P$ is a rational place, we know that $q,q+1 \in H(P)$. Let us recursively define functions $h_j \in L((j+1)qP_\infty)$ as follows: $h_0:=x_a-y_b$, $h_1:=-x_a+y_b+(x_a+y_b)^2$ and for $j \ge 2$, $h_j:=(y_b^2-x_a^2)h_{j-2}-h_{j-1}.$ Since $x_a,y_b \in L(qP_\infty)$ by Equations \eqref{divx1} and \eqref{divy1}, it follows directly from the definition that $h_j \in L((j+1)qP_\infty).$

Using the same notation $T$ as before, we claim that for all $j \ge 0$ one has the following power series developments at $P$:
\begin{equation}\label{eq:gj_powerseries}
h_j=T^{3j+2}+T^{3j+3}+O(T^q).
\end{equation}
Since $\beta(P)=1$, Equations \eqref{eq:xa_powerseries} and \eqref{eq:yb_powerseries} imply Equation \eqref{eq:gj_powerseries} holds for $j=0$ and $j=1$. Now assume that $j \ge 2$ and that Equation \eqref{eq:gj_powerseries} holds for $j-2$ and $j-1$. Then from the recursive definition of $h_j$ one finds
\begin{eqnarray*}
h_j & = & (T^3+T^6)(T^{3j-4}+T^{3j-3})-(T^{3j-1}+T^{3j})+O(T^q)\\
& = & T^{3j+2}+T^{3j+3}+O(T^q),
\end{eqnarray*}
which is what we needed to show.

Now consider the function $H_j:=h_j/F_P^{j+1}$, then $v_{P_\infty}(H_j) \ge -(j+1)q+(j+1)q=0$, which implies that $H_j$ only can have a pole at $P$. Further, Equation \eqref{eq:gj_powerseries} implies that for all $j$ satisfying $0 \le j \le m-1$ one has 
$$-v_P(H_j)=-(3j+2)+(j+1)(q+1)=(q-1)+j(q-2).$$
We may conclude that 
\begin{equation*}
\langle q, q+1, (q-1) +j(q-2) \mid j=0,\ldots, m-1\rangle \subseteq H(P).
\end{equation*}

What is left is to show that the semigroup $H:=\langle q, q+1, (q-1) +j(q-2) \mid j=0,\ldots, m-1\rangle$ has at most $g(Z_3)$ many gaps. Using the generators of $H$, it is clear that the only possible gaps of $H$ are contained in one of the sets 
$$\{1,\dots,q-2\}, \{q+2,\dots,2q-4\}, \{2q+3,\dots,3q-6\}\dots,\{m(q-2)\}.$$ Hence one obtains that the number of gaps of $H$ is at most
\[(q-2)+(q-5)+\cdots+1=\sum_{k=0}^{m-1}(q-2-3k)=m(q-2)-3\frac{m(m-1)}{2}=\frac{q^2-q}6.\]
\end{proof}
Note that the proof of Theorem \ref{sembeta1} has similarities with the proof of Theorem 5.4 in \cite{BMV2}.
Because of the results in this subsection, we are left with considering the case $\beta(P)^{(q-1)/2}=-1$ in order to obtain the complete determination of the structure of $H(P)$ if $P$ is a rational place.
In the next section, we recursively construct some functions that will be fundamental in determining $H(P)$ for these places as well as for the non-rational places. 

\subsection{Three classes of special polynomials}

We use the same notation as in the previous section: for a place $P=P_{(a,b)}$, we define $\beta(P):=(b+b^3+\cdots+b^{q/3})^2=-(a^q+a)$. In this subsection, we often just write $\beta$ instead of $\beta(P)$ for simplicity. Further, if $P_{(a,b)}$ lies below a point $Q_{(A,B)}$ of the Hermitian function field, then just as before we write $T=(v-B)/(B^q-B).$ Here we assume that $B^q-B=p(b)\neq 0$. Again as before, we write $x_a=-(x-a)/\beta$ and $y_b=-(y-b)/(B^q-B)$, and Equations \eqref{eq:xa_powerseries} and \eqref{eq:yb_powerseries} state that
\begin{eqnarray*}
x_a  = T+T^2+O(T^q) \quad \text{and} \quad y_b =  T-\beta T^3+O(T^q).
\end{eqnarray*}

We will construct two families of function $f_j \in L((j+1)qP_\infty)$ and $g_\ell \in L((3\ell+4)m P_\infty)$ vanishing in $P$ with various multiplicities. In order to define and investigate these functions and more in particular the coefficients appearing in their power series in $T$, we will need three families of polynomials. In this subsection, we define and study these polynomials. In the next subsection, we construct the functions $f_j$ and $g_\ell$.

\begin{definition}\label{def:PQR}
We define the polynomials $\mathcal{P}_{1}(s)=1$, $\mathcal{Q}_{1}(s)=s$, $\mathcal{R}_1(s)=-(s+1)$,  $\mathcal{P}_{2}(s)=-s^3$, $\mathcal{Q}_{2}(s)=s^4-s^3-s^2$ and $\mathcal{R}_2(s)=-(s^4-s^3+s).$ Moreover, for any integer $j\ge 2$, we recursively define
polynomials $\mathcal{P}_{j+1}(s),\mathcal{Q}_{j+1}(s),\mathcal{R}_{j+1}(s)$ as follows:
\begin{equation}\label{eq:recdefP}
\mathcal{P}_{j+1}(s)=\mathcal{P}_{2}(s) \cdot \mathcal{P}_{j}(s)-(s(s-1))^3\cdot\mathcal{P}_{j-1}(s), 
\end{equation}
\begin{equation}\label{eq:recdefQ}
\mathcal{Q}_{j+1}(s)={\mathcal P}_2(s) \cdot \mathcal{Q}_{j}(s)-(s(s-1))^3\cdot\mathcal{Q}_{j-1}(s) \end{equation}
and
\begin{equation*}
\mathcal{R}_{j+1}(s)={\mathcal P}_2(s) \cdot \mathcal{R}_{j}(s)-(s(s-1))^3\cdot\mathcal{R}_{j-1}(s) 
\end{equation*}
\end{definition}

It will also be convenient to define ${\mathcal P}_0(s)=0$, ${\mathcal Q}_0(s)=(s(s-1))^{-1}$ and ${\mathcal R}_0(s)=-1/s^2$. The effect of this is that the recursive equations in Definition \ref{def:PQR} also hold for $j=1$.

\begin{remark}
Note that
\[\left[
\begin{array}{c}
\mathcal{R}_0(s)\\
\mathcal{R}_1(s)
\end{array}\right]=\left[
\begin{array}{c}
\mathcal{P}_0(s)\\
\mathcal{P}_1(s)
\end{array}\right]-\frac{s-1}{s}\left[
\begin{array}{c}
\mathcal{Q}_0(s)\\
\mathcal{Q}_1(s)
\end{array}\right].
\]
Since the polynomials $\mathcal{R}_j(s)$ satisfy the same second order linear recursion as the polynomials $\mathcal{P}_j(s)$ and $\mathcal{Q}_j(s)$, this implies that
\begin{equation}\label{eq:RinPQ}
\mathcal{R}_j(s)=\mathcal{P}_j(s)-\frac{s-1}{s}\mathcal{Q}_j(s) \quad \text{for all $j \ge 0$}.
\end{equation}
\end{remark}

Before returning to constructing the functions $f_j$ and $g_\ell$ above, we need to collect several facts about the polynomials we just defined.

\begin{lemma}
    \label{lem:PQ:identities}
Let $i,j,\ell \in \Z_{\ge 0}$. Then
\begin{equation} \label{eq:id:1}
    \PP_{i+j}(s)\PP_{i+\ell}(s)-\PP_i(s) \PP_{i+j+\ell}(s) =(s^6-s^3)^{i} \PP_{j}(s) \PP_{\ell}(s),
\end{equation}
\begin{equation} \label{eq:id:2}
    \PP_{i+j}(s)\QQ_{i+\ell}(s)-\PP_i(s) \QQ_{i+j+\ell}(s) =(s^6-s^3)^{i} \PP_{j}(s) \QQ_{\ell}(s).
\end{equation}
and
\begin{equation}
    \label{eq:id:3}
    \PP_{i+j}(s)\RR_{i+\ell}(s)-\PP_i(s) \RR_{i+j+\ell}(s) =(s^6-s^3)^{i} \PP_{j}(s) \RR_{\ell}(s).
\end{equation}
\end{lemma}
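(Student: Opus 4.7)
The plan is to unify the three identities into a single claim valid for any sequence satisfying the common recursion. Namely, for any sequence $(Y_j)_{j\ge 0}$ of polynomials obeying
\[
Z_{j+1}(s) = \PP_2(s)\, Z_j(s) - (s(s-1))^3\, Z_{j-1}(s),
\]
and for all $i,j,\ell \ge 0$, I would prove
\[
\PP_{i+j}(s)\, Y_{i+\ell}(s) - \PP_i(s)\, Y_{i+j+\ell}(s) = (s^6-s^3)^i\, \PP_j(s)\, Y_\ell(s).
\]
Since $\PP_j$, $\QQ_j$ and $\RR_j$ all satisfy this recursion (as pointed out immediately after Definition \ref{def:PQR}), specializing $Y$ to each in turn yields \eqref{eq:id:1}, \eqref{eq:id:2} and \eqref{eq:id:3}. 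A preliminary observation is that since $q=3^t$ we work in characteristic $3$, and there $(s(s-1))^3 = s^3(s^3-1) = s^6-s^3$; this accounts for the form of the factor on the right.

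To prove the unified identity, I would fix $i$ and $\ell$ and treat both sides as sequences in the variable $j$. Set
\[
F(j) := \PP_{i+j}\, Y_{i+\ell} - \PP_i\, Y_{i+j+\ell}, \qquad G(j) := (s^6-s^3)^i\, \PP_j\, Y_\ell.
\]
Clearly $G(j)$ satisfies the recursion in $j$ since $\PP_j$ does. For $F(j)$, expanding $\PP_{i+j}$ via the recursion applied to its leading index and $Y_{i+j+\ell}$ via the recursion applied to its leading index — while the spectator factors $Y_{i+\ell}$ and $\PP_i$ remain inert — regroups to
\[
F(j) = \PP_2\, F(j-1) - (s(s-1))^3\, F(j-2).
\]
So $F$ and $G$ satisfy the same second-order linear recursion, and it remains only to check agreement at $j=0$ and $j=1$.

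For $j=0$ both $F(0)$ and $G(0)$ vanish, using $\PP_0=0$. For $j=1$, the statement reduces to $\PP_{i+1} Y_{i+\ell} - \PP_i Y_{i+\ell+1} = (s^6-s^3)^i Y_\ell$, which I would prove by a nested induction on $i$. The base $i=0$ uses $\PP_1=1$, $\PP_0=0$. For the step, one expands $\PP_{i+2}$ and $Y_{i+\ell+2}$ via the recursion; the $\PP_2$-terms cancel, leaving exactly $(s(s-1))^3 = s^6-s^3$ times the assertion at level $i$.

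The main obstacle is this $j=1$ base case: it is the only place where characteristic $3$ genuinely enters (through $(s(s-1))^3 = s^6-s^3$), and where one must spot the cancellation of $\PP_2$-terms. Everything else is formal bookkeeping using that all three sequences obey a common recursion. As an alternative endgame, identity \eqref{eq:id:3} need not be treated separately at all: it can be deduced directly from \eqref{eq:id:1} and \eqref{eq:id:2} by substituting the relation $\RR_j = \PP_j - \tfrac{s-1}{s}\QQ_j$ from Equation \eqref{eq:RinPQ} and using linearity.
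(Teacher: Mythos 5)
Your proof is correct, and it rests on the same algebraic engine as the paper's — namely that $\PP_j$, $\QQ_j$, $\RR_j$ all satisfy one second-order linear recursion, so an identity bilinear in them propagates by induction once two base cases are checked — but you organize the induction along the transposed index. The paper fixes $j,\ell$ and inducts on $i$ (with a two-term hypothesis at levels $i$ and $i-1$), handling the base cases $i=0$ (trivial, via $\PP_0=0$) and $i=1$ (dismissed with ``induction on $j$ can be used''), and it proves only \eqref{eq:id:2}, declaring \eqref{eq:id:1} and \eqref{eq:id:3} ``very similar.'' You instead fix $i,\ell$, observe that both sides satisfy the recursion in $j$, and reduce to the base cases $j=0$ (trivial) and $j=1$, the latter by a clean induction on $i$ in which the $\PP_2$-terms cancel exactly — I checked this computation and it is right, including the use of characteristic $3$ to identify $(s(s-1))^3$ with $s^6-s^3$. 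Your version buys two things: the nontrivial base case is actually written out rather than deferred (arguably the paper's $i=1$ case is the least transparent step of its argument), and by phrasing the claim for an arbitrary solution $Y_j$ of the recursion you get all three identities in one pass instead of three parallel proofs; your closing remark that \eqref{eq:id:3} also follows from \eqref{eq:id:1} and \eqref{eq:id:2} by linearity via Equation \eqref{eq:RinPQ} matches the mechanism the paper itself sets up in the remark preceding the lemma. One small point of hygiene: when you invoke the recursion at index $2$ (e.g.\ for $Y_{i+\ell+2}$ with $i=\ell=0$) you are implicitly using the paper's convention that the recursive equations also hold for $j=1$ thanks to the definitions of $\PP_0$, $\QQ_0$, $\RR_0$; it is worth saying so explicitly, since $\QQ_0$ and $\RR_0$ are rational functions rather than polynomials and the identities are then identities of rational functions.
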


\begin{proof}
We prove the second identity only, since the proof of the first and last ones are very similar. For convenience, we leave out the variable for polynomials such as $\PP_i(s)$, $\QQ_{i}(s)$ and simply write $\PP_i$ and $\QQ_i$.

We prove the identity using induction on $i$. If $i=0$, Equation \eqref{eq:id:2} is obvious, using that $\PP_0=0.$ If $i=1$, induction on $j$ can be used.

Now assume that $i \ge 1$ and that Equation \eqref{eq:id:2} holds for $i$ and $i-1$ and all $j$ and $\ell$. Then using the recursive definition from Equations \eqref{eq:recdefP} and \eqref{eq:recdefQ} together with the induction hypothesis, we find:
\begin{eqnarray*}
\PP_{i+j+1} \QQ_{i+\ell+1} -\PP_{i+1} \QQ_{i+j+\ell+1} & = & 
(\PP_2\PP_{i+j}-(s^6-s^3)\PP_{i+j-1})\QQ_{i+\ell+1}-(\PP_2 \PP_i-(s^6-s^3)\PP_{i-1})\QQ_{i+j+\ell+1}\\
& = & \PP_2 \cdot (\PP_{i+j}\QQ_{i+\ell+1}-\PP_i\QQ_{i+j+\ell+1})-(s^6-s^3)(\PP_{i+j-1}\QQ_{i+\ell+1}-\PP_{i-1}\QQ_{i+j+\ell+1})\\
& = & \PP_2 \cdot (s^6-s^3)^{i} \PP_{j} \QQ_{\ell+1}-(s^6-s^3)^{i} \PP_{j} \QQ_{\ell+2}\\
& = & (s^6-s^3)^{i} \PP_{j} \cdot (\PP_2 \QQ_{\ell+1} -\QQ_{\ell+2})\\
& = & (s^6-s^3)^{i+1} \PP_{j} \QQ_\ell.
\end{eqnarray*}
\end{proof}

\begin{remark}\label{rem:common_roots_P_Q}
Choosing $j=1$ and $\ell=0$ in Equation \eqref{eq:id:2} one obtains that
$$\PP_{i+1}(s)\QQ_{i}(s)-\PP_i(s) \QQ_{i+1}(s) =(s^6-s^3)^{i} \cdot \frac{1}{s(s-1)}=s^{3i-1}(s-1)^{3i-1}.$$
This implies that the only possible common roots of $\PP_{i}(s)$ and $\QQ_i(s)$ are $0$ and $1$.
Similarly, one can use Equation \eqref{eq:id:3} to show that $0$ and $1$ are the only possible common roots of $\PP_{i}(s)$ and $\RR_i(s)$.
\end{remark}

Next, we derive a closed formula for the polynomials $\PP_i(s)$, $\QQ_i(s)$ and $\RR_i(s)$. 

\begin{theorem}\label{thm:PQformulas}
Let $i \in \mathbb{Z}_{\ge 0}$. Then we have
\[\PP_i(s)=\frac{-(s^3+s \sqrt{s})^i+(s^3-s \sqrt{s})^i}{s\sqrt{s}},\]
\[\QQ_i(s)=\frac{(\sqrt{s}-1)(s^3+s \sqrt{s})^i-(\sqrt{s}+1)(s^3-s \sqrt{s})^i}{s(s-1)}\]
and
\[\RR_i(s)=\frac{(\sqrt{s}+1)(s^3+s \sqrt{s})^i-(\sqrt{s}-1)(s^3-s \sqrt{s})^i}{s^2}.\]
\end{theorem}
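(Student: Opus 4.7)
The plan is to recognize the three recurrences in Definition \ref{def:PQR} as linear second-order recurrences with the same constant (in $j$) coefficients, and to apply the standard closed-form solution in terms of the characteristic roots. Working in the field $\overline{\mathbb F}_3(s)(\sqrt s)$, I would set
$$r_{\pm} := s^3 \pm s\sqrt{s}.$$

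First I would check that $r_\pm$ are the roots of the characteristic polynomial of the recursion. In characteristic $3$, using the Frobenius identity $s^6 - s^3 = (s^2-s)^3 = (s(s-1))^3$, one has
$$r_+ + r_- = 2s^3 = -s^3 = \PP_2(s), \qquad r_+ r_- = s^6 - s^3 = (s(s-1))^3.$$
Hence $r_\pm$ are the two (distinct, since $r_+-r_- = 2s\sqrt{s} \ne 0$) roots of $X^2 - \PP_2(s)X + (s(s-1))^3 = 0$, which is precisely the characteristic polynomial of the linear recursion $X_{j+1} = \PP_2(s)\,X_j - (s(s-1))^3\,X_{j-1}$ appearing in Equations \eqref{eq:recdefP}, \eqref{eq:recdefQ} and the analogous one for $\RR_j$.

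Next, let $\widetilde\PP_i, \widetilde\QQ_i, \widetilde\RR_i$ denote the right-hand sides of the three formulas claimed in the theorem. Since each is a fixed $\overline{\mathbb F}_3(s)(\sqrt s)$-linear combination of $r_+^i$ and $r_-^i$, general theory forces each to satisfy the same second-order recursion as $\PP_i, \QQ_i, \RR_i$. Consequently, to prove the theorem it suffices to verify that the closed forms match the initial values at $i=0$ and $i=1$ (extended by the values $\PP_0(s)=0$, $\QQ_0(s) = (s(s-1))^{-1}$, $\RR_0(s) = -1/s^2$ given just after Definition \ref{def:PQR}). This reduces the entire theorem to six straightforward substitutions; for example,
$$\widetilde\PP_0 = \frac{-1+1}{s\sqrt{s}} = 0, \qquad \widetilde\PP_1 = \frac{-2s\sqrt{s}}{s\sqrt{s}} = -2 = 1 \text{ in characteristic }3,$$
and analogously (slightly longer but equally mechanical) for $\widetilde\QQ_0,\widetilde\QQ_1,\widetilde\RR_0,\widetilde\RR_1$, using the identity $(\sqrt{s}-1)(\sqrt{s}+1)=s-1$ to simplify.

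The only real subtlety — not so much an obstacle as an interpretive point — is that the closed-form expressions only make sense and only hold \emph{in characteristic $3$}: the identification $\PP_1 = -2 = 1$ and the factorization $s^6 - s^3 = (s(s-1))^3$ both depend essentially on working over $\overline{\mathbb F}_3$. Since the paper takes $q = 3^t$ throughout, this is the natural setting, and no additional difficulty arises.
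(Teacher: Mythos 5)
Your proposal is correct and follows essentially the same route as the paper: both identify $s^3\pm s\sqrt{s}$ as the characteristic roots of the second-order recursion (the paper phrases this as eigenvalues of the transfer matrix and diagonalizes, starting from the values at $i=1,2$, while you invoke the general solution of a linear recurrence and check the extended initial values at $i=0,1$). The computations you sketch, including the characteristic-$3$ identities $2s^3=-s^3=\mathcal{P}_2(s)$ and $s^6-s^3=(s(s-1))^3$, all check out.
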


\begin{proof}
Using Equation \ref{eq:RinPQ}, the third identity follows directly from the first two. We therefore only need to prove the first two identities. 
One can verify directly that these are true for $i=0$ and $i=1$.
Equations \eqref{eq:recdefP} and \eqref{eq:recdefQ} imply that for $i \ge 2$
\[
\left[\begin{array}{cc}
 \PP_2(s)    & -s^6+s^3 \\
   1  &  0
   \end{array}\right]
\cdot
\left[\begin{array}{cc}
 \PP_i(s)    & \QQ_i(s) \\
 \PP_{i-1}(s)   & \QQ_{i-1}(s)
\end{array}\right]
=
\left[\begin{array}{cc}
 \PP_{i+1}(s)    & \QQ_{i+1}(s) \\
 \PP_{i}(s)   & \QQ_{i}(s)
\end{array}\right].
\] 
Hence for $i \ge 2$, we have
\[
\left[\begin{array}{cc}
 \PP_i(s)    & \QQ_i(s) \\
 \PP_{i-1}(s)   & \QQ_{i-1}(s)
\end{array}\right]
=
\left[\begin{array}{cc}
 \PP_2(s)    & -s^6+s^3 \\
   1  &  0
   \end{array}\right]^{i-2}
\cdot
\left[\begin{array}{cc}
 \PP_{2}(s)    & \QQ_{2}(s) \\
 \PP_{1}(s)   & \QQ_{1}(s)
\end{array}\right].
\] 
Note that the matrix $\left[\begin{array}{cc}
 \PP_2(s)    & -s^6+s^3 \\
   1  &  0
\end{array}\right]$ has eigenvalues $s^3 \pm s\sqrt{s}$ with corresponding eigenvectors for example 
$$
\left[\begin{array}{c}
 s^3 \pm s \sqrt{s} \\
   1 
\end{array}\right].$$
Hence for $i \ge 2$, we have
\begin{multline*}
\left[\begin{array}{cc}
 \PP_i(s)    & \QQ_i(s) \\
 \PP_{i-1}(s)   & \QQ_{i-1}(s)
\end{array}\right]
=
\left[\begin{array}{cc}
 s^3+s\sqrt{s}    & s^3-s\sqrt{s} \\
   1  &  1
   \end{array}\right]
\cdot
\left[\begin{array}{cc}
 (s^3+s\sqrt{s})^{i-2}    & 0 \\
   0  &  (s^3-s\sqrt{s})^{i-2}
   \end{array}\right]
\cdot \\
\left[\begin{array}{cc}
 s^3+s\sqrt{s}    & s^3-s\sqrt{s} \\
   1  &  1
   \end{array}\right]^{-1}
   \cdot
\left[\begin{array}{cc}
 \PP_{2}(s)    & \QQ_{2}(s) \\
 \PP_{1}(s)   & \QQ_{1}(s)
\end{array}\right].
\end{multline*}
Working out the matrix products on the right-hand side of this equation and simplifying the outcome, the desired identities follow for $i \ge 2$.
\end{proof}

\begin{corollary}\label{cor:originaldefinitionR}
Let $i \in \Z_{\ge 1}$, then
\[\RR_i(s)=\RR_{i-1}(s) s (s-1)^2 + \PP_i(s)/s.\]
\end{corollary}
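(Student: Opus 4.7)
The plan is to prove the identity by induction on $i$, exploiting the fact that both $\RR_j(s)$ and the claimed right-hand side
\[g_j(s) := s(s-1)^2 \RR_{j-1}(s) + \PP_j(s)/s\]
satisfy the same three-term linear recursion $X_{j+1} = \PP_2(s) X_j - (s(s-1))^3 X_{j-1}$ of Definition \ref{def:PQR} (using the conventions $\PP_0 = 0$, $\RR_0 = -1/s^2$ so the recursion is valid from $j = 1$ onward). To see that $g_j$ inherits this recursion, I would substitute and regroup, using the recursion for $\RR$ and the recursion for $\PP$ divided by $s$:
\begin{align*}
\PP_2\, g_j - (s^6 - s^3)\, g_{j-1} &= s(s-1)^2\bigl[\PP_2 \RR_{j-1} - (s^6-s^3)\RR_{j-2}\bigr] + \bigl[\PP_2 \PP_j - (s^6-s^3)\PP_{j-1}\bigr]/s \\
&= s(s-1)^2 \RR_j + \PP_{j+1}/s = g_{j+1}.
\end{align*}
With this in hand, it suffices to verify $g_i = \RR_i$ for two consecutive values of $i$.

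I would take the base cases $i = 1$ and $i = 2$. For $i = 1$, using $\RR_0 = -1/s^2$ and $\PP_1 = 1$, a quick computation gives
\[g_1 = s(s-1)^2 \cdot (-1/s^2) + 1/s = (1 - (s-1)^2)/s = (2s - s^2)/s = 2 - s.\]
Since $q = 3^t$, the ambient characteristic is $3$, so $2 - s = -(s+1) = \RR_1$, as required. For $i = 2$, an analogous direct expansion yields $g_2 = -s(s-1)^2(s+1) - s^2 = -s^4 + s^3 - s = \RR_2$, an identity which, by contrast, holds in any characteristic.

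The one subtlety to watch for — and the natural candidate for what could go wrong — is that characteristic $3$ is genuinely used already at the $i = 1$ base case: without the hypothesis $q = 3^t$, one would obtain $g_1 = 2 - s \neq -(s+1) = \RR_1$. Since the $i = 2$ case passes in any characteristic, an inattentive choice of base indices could obscure precisely where the characteristic-$3$ assumption is essential. Everything else in the argument is routine, and the induction closes immediately once both base cases are confirmed.
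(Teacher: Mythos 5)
Your proof is correct, but it takes a genuinely different route from the paper. The paper's proof is a one-line appeal to the closed formulas of Theorem~\ref{thm:PQformulas}: one substitutes the explicit expressions for $\RR_{i-1}$, $\RR_i$ and $\PP_i$ in terms of $(s^3\pm s\sqrt{s})^i$ and checks that the coefficients of $(s^3+s\sqrt{s})^{i-1}$ and $(s^3-s\sqrt{s})^{i-1}$ match (which, as in your argument, uses characteristic $3$). You instead observe that both sides satisfy the second-order recursion $X_{j+1}=\PP_2(s)X_j-(s(s-1))^3X_{j-1}$ --- the right-hand side inherits it because it is a fixed linear combination, with coefficients $s(s-1)^2$ and $1/s$, of the shifted sequences $(\RR_{j-1})_j$ and $(\PP_j)_j$, each of which satisfies the recursion from the stated index onward thanks to the conventions $\PP_0=0$, $\RR_0=-1/s^2$ --- and then verify the two base cases $i=1,2$ directly. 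Both base cases check out ($g_1=2-s=-(s+1)=\RR_1$ in characteristic $3$, and $g_2=-s^4+s^3-s=\RR_2$ identically), and your remark that the characteristic-$3$ hypothesis already enters at $i=1$ is accurate and a worthwhile observation. What your approach buys is independence from Theorem~\ref{thm:PQformulas}: the corollary becomes a direct consequence of Definition~\ref{def:PQR} alone, avoiding the square roots and the eigenvalue computation entirely; what the paper's approach buys is brevity, since the closed formulas are already established and make the verification mechanical.
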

\begin{proof}
This follows directly using the explicit formulas given in Theorem \ref{thm:PQformulas}.
\end{proof}

Since, as we will see later, the polynomials from Definition \ref{def:PQR} occur as coefficients in power series expansions of function we wish to construct, we are also interested in determining if these coefficients are zero or not. This motivates the following definition.

\begin{definition}
\label{def:Pord}
Let $\beta \in \overline{\F}_{q^2} \setminus \{0,1\}$. Then we define the $\PP$-order (resp. $\RR$-order) of $\beta$ as the smallest positive integer $i$ such that $\PP_{i+1}(\beta)=0$ (resp. $\RR_{i+1}(\beta)=0$). If $P$ is a place of $\Zte$ such that $\beta(P) \not \in \{0,1,\infty\}$, then define the $\PP$-order (resp. $\RR$-order) of $P$ to be the $\PP$-order (resp. $\RR$-order) of $\beta(P)$.
\end{definition}

Note that this definition plays a similar role as Definition 3.10 from \cite{BMV}. We finish this subsection with some remarks.

\begin{remark}\label{rem:concerningPorder}
Note that Theorem \ref{thm:PQformulas} implies that $P$ with $\beta(P) \not\in \{0,1,\infty\}$ has $\mathcal{P}$-order $i$ if and only if $i$ is the smallest positive integer such that the element $\gamma:=(\sqrt{\beta(P)}+1)/(\sqrt{\beta(P)}-1)$ satisfies $\gamma^{3i+3}=1$. Since the characteristic is three, this is equivalent with $\gamma^{i+1}=1$. Therefore the $\PP$-order of $\beta$ is $i$ if and only if the corresponding element $\gamma$ has multiplicative order $i+1$. In particular, the $\PP$-order is always a finite number. Note that $i \ge 2$. Indeed, $i=0$ implies $\gamma=1$, which would imply $1=-1$, while $i=1$ implies $\gamma=-1$, which would imply that $\sqrt{\beta(P)}=0$. 
\end{remark}

\begin{remark}\label{rem:reltionPandRorder}
Similarly as in the previous remark, the explicit formula for $\RR_{K+1}(s)$ implies that $P$ with $\beta(P) \not\in \{0,1,\infty\}$, has finite $\RR$-order $K$ if and only if $K$ is the smallest positive integer such that $\gamma^{3K}=\gamma^{-1}$, that is to say, such that $\gamma^{3K+1}=1$. Note that such a $K$ always exists. Indeed if $\gamma$ has multiplicative order $j$, then $j$ is not a multiple of three, since, using that the characteristic is three, if it were, then $\gamma^j=1$ would imply $\gamma^{j/3}=1$. Hence either $j$ or $2j$ is one modulo three. This means that $K=(j-1)/3$ if $j$ is one modulo three and $K=(2j-1)/3$ if $j$ is two modulo three. In view of the previous remark, we see that if $i$ is the $\PP$-order of $P$, then its $\RR$-order $K$ equals $K=i/3$ if $i$ is zero modulo three and $K=(2i+1)/3$ if $i$ is one modulo three. Note that this implies $K<i$, since $i \ge 2.$ 
\end{remark}

\begin{remark}\label{rem:P-order_rational}
If $P$ is a rational place of $Z_3$ with $\beta(P) \not\in \{0,1,\infty\}$, then according to Lemma \ref{lem:beta_rational} we have $\beta(P)^{(q-1)/2}=-1$. Hence $\sqrt{\beta}^{q-1}=-1$, which implies that $\sqrt{\beta}^{q}=-\sqrt{\beta}$. As a consequence, if we write as in Remark \ref{rem:concerningPorder} $\gamma=(\sqrt{\beta(P)}+1)/(\sqrt{\beta(P)}-1)$, then $\gamma^q=\gamma^{-1}$. This means that the multiplicative order of $\gamma$ divides $q+1$. In particular, $i$ is the $\PP$-order of a rational place $P$ with $\beta(P) \not\in \{0,1,\infty\}$ only if $i+1$ divides $q+1$ and $i \ge 2$. Conversely, it is not hard to see that if $i \ge 2$ is a positive integer such that $i+1$ divides $q+1$, then there exist exactly $q^2/3\cdot \varphi(i+1)$ rational places $P$ with $\PP$-order $i$. Indeed, there are precisely $\varphi(i+1)$ rational places $Q$ of $\mathbb{F}_{q^2}(\sqrt{\beta})=\mathbb{F}_{q^2}(p(y))$ such that $\gamma=(\sqrt{\beta(Q)}+1)/(\sqrt{\beta(Q)}-1) \in \mathbb{F}_{q^2}$ has multiplicative order $i+1$ and all these places split completely in the extension $Z_3/\mathbb{F}_{q^2}(\sqrt{\beta})$, which is a Galois extension of degree $q^2/3$.
\end{remark}

\subsection{Two classes of special functions}

We now construct the functions $f_j$ and $g_\ell$ alluded to in the previous subsection.
We start by considering the function $f_0:=x_a-y_b$. One has 
\begin{equation} \label{f0}
f_0=T^2+\beta T^3+O(T^q).
\end{equation}
Now define $f_1:=f_0-x_a^2-(\beta+1)x_a f_0+(\beta^2-\beta-1)f_0^2$, then 
\begin{equation} \label{f1}
f_1=2\beta^3 T^5+(\beta^4-\beta^3-\beta^2)T^6+O(T^q).
\end{equation}
Similarly, defining $f_2:=(\beta^2-\beta)f_0f_1+\beta^3f_0^2x_a+f_1+\beta^2f_0^3$, one obtains
\begin{equation} \label{f2}
f_2=\beta^3 T^8+(\beta^7+\beta^6+\beta^5+\beta^4)T^9+O(T^q).
\end{equation}
We are going to construct functions $f_j \in L((j+1)qP_\infty)$ such that their power series at $P$ satisfy
\begin{equation}\label{eq:fj_powerseries1}
f_j=\mathcal{P}_{j+1}(\beta)T^{3j+2}+\mathcal{Q}_{j+1}(\beta)T^{3j+3}+O(T^q),
\end{equation}
where $\mathcal{P}_{j+1}(s),\mathcal{Q}_{j+1}(s)$ are the polynomials from Definition \ref{def:PQR}. 

Note that the functions $f_0$ and $f_1$ satisfy Equation \eqref{eq:fj_powerseries1} using Equations \eqref{f0} and \eqref{f1} and the definitions of $\mathcal{P}_j(s)$ and $\mathcal{Q}_j(s)$ for $j=1,2$.
The function $f_2$ we constructed also satisfies Equation \eqref{eq:fj_powerseries1}. Indeed this follows using Equation \eqref{f2} and the fact that
$$\mathcal{P}_{3}(s)=\mathcal{P}_{2}(s)^2-s^3(s^3-1)\mathcal{P}_{1}(s)=s^3$$ and $$\mathcal{Q}_{3}(s)=\mathcal{P}_{2}(s)\mathcal{Q}_{2}(s)-s^3(s^3-1)\mathcal{Q}_{1}(s)=s^7+s^6+s^5+s^4.$$ 
Next we want to define $f_j \in L((j+1)qP_\infty)$ for $j \ge 3$. Note that since $x_a,y_b \in L(qP_\infty)$, it immediately follows that $f_j \in L((j+1)qP_\infty)$ for $j=0,1,2$.

\begin{definition}
\label{def:fj}
Let $P$ be a place of $\Zte$ such that $\beta(P) \not \in \{0,1,\infty\}$. Suppose that $P$ has $\mathcal{P}$-order $i$. Then for $ 3 \le j \le i$, we recursively define $$f_j=\frac{\mathcal{P}_2(\beta(P))\mathcal{P}_{j-1}(\beta(P))f_0 f_{j-1}-\mathcal{P}_{j}(\beta(P))f_1 f_{j-2}}{\beta(P)^2 (\beta(P)-1)^2 \mathcal{P}_{j-2}(\beta(P))}.$$
\end{definition}

Note that it is easy to show that $f_j \in L((j+1)qP_\infty)$ by the recursive definition and the fact that $f_j \in L((j+1)qP_\infty)$ for $j=0,1,2$. Also note that since $3 \le j \le i$, the term $\mathcal{P}_{j-2}(\beta(P))$ in the numerator in Definition \ref{def:fj} is not zero. As we already showed that Equation \eqref{eq:fj_powerseries1} holds for $j=0,1,2$, we now show it holds for $j \ge 3$ as well: 

\begin{theorem}
\label{thm:fi}
Let $P$ be a place of $\Zte$ such that $\beta(P)\not \in \{0,1,\infty\}$, and let $i$ be the $\PP$-order of $\beta(P)$. If $i \le m-1$, then the function $f_i$ defined in \cref{def:fj} is such that $v_{P}(f_{i})=3i+3$. On the other hand, for each $j\in \mathbb{Z}$ with $0\leq j\leq \min\{i-1,m-1\}$, the function $f_j$ from \cref{def:fj} is such that $v_{P}(f_j)=3j+2$.
\end{theorem}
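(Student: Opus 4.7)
The plan is to prove, by induction on $j$, the power series expansion stated in Equation~\eqref{eq:fj_powerseries1}, namely
\[f_j = \PP_{j+1}(\beta) T^{3j+2} + \QQ_{j+1}(\beta) T^{3j+3} + O(T^q),\]
for every $j$ with $0 \le j \le \min\{i, m-1\}$, and then read off the valuations. Once this expansion is in hand, both assertions of the theorem are immediate: for $j \le \min\{i-1, m-1\}$ the definition of the $\PP$-order of $\beta$ gives $\PP_{j+1}(\beta) \ne 0$, and $3j+2 \le 3(m-1)+2 = q-1$ prevents this coefficient from being absorbed by $O(T^q)$, so $v_P(f_j)=3j+2$; for $j=i$ we have $\PP_{i+1}(\beta)=0$ by definition of the $\PP$-order, while Remark~\ref{rem:common_roots_P_Q} combined with $\beta \notin \{0,1\}$ forces $\QQ_{i+1}(\beta) \ne 0$, giving $v_P(f_i)=3i+3$.

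The base cases $j=0,1,2$ have already been verified in the paper just after Definition~\ref{def:fj}. For the inductive step I substitute the induction hypothesis for $f_{j-1}$ and $f_{j-2}$ into Definition~\ref{def:fj}, and expand the two products $f_0 f_{j-1}$ and $f_1 f_{j-2}$ up to order $T^{3j+3}$. The $T^{3j+1}$ contributions cancel in the numerator by the very choice of the prefactors $\PP_2(\beta)\PP_{j-1}(\beta)$ and $\PP_j(\beta)$, and the entire induction is reduced to verifying the two polynomial identities
\begin{align*}
\PP_2(\PP_{j-1}\QQ_j - \PP_j \QQ_{j-1}) + \PP_{j-1}\PP_j(\beta \PP_2 - \QQ_2) &= \beta^2(\beta-1)^2 \PP_{j-2}\PP_{j+1},\\
\beta\PP_2\PP_{j-1}\QQ_j - \QQ_2\PP_j\QQ_{j-1} &= \beta^2(\beta-1)^2 \PP_{j-2}\QQ_{j+1},
\end{align*}
which ensure that, after dividing by the denominator $\beta^2(\beta-1)^2\PP_{j-2}(\beta)$, the leading coefficients of $f_j$ at $T^{3j+2}$ and $T^{3j+3}$ are exactly $\PP_{j+1}(\beta)$ and $\QQ_{j+1}(\beta)$. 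Both identities I would establish with Lemma~\ref{lem:PQ:identities}: the brackets $\PP_{j-1}\QQ_j - \PP_j\QQ_{j-1}$ and $\PP_{j-2}\PP_{j+1} - \PP_{j-1}\PP_j$ evaluate (via Equations~\eqref{eq:id:2} and~\eqref{eq:id:1} specialised with small indices, using $\PP_1=1$ and $\QQ_0=1/(s(s-1))$) to $\pm\beta^{3j-4}(\beta-1)^{3j-4}$ and $\pm\beta^{3j-3}(\beta-1)^{3j-6}$ respectively, while $\beta\PP_2-\QQ_2$ simplifies straight from Definition~\ref{def:PQR} to $\beta^2(\beta-1)^2$ since $\beta^2+\beta+1=(\beta-1)^2$ in characteristic three.

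The main obstacle is the careful bookkeeping of these identities: the characteristic-three collapse $s^6-s^3=s^3(s-1)^3$ and the recursions~\eqref{eq:recdefP}-\eqref{eq:recdefQ} are repeatedly invoked to match exponents and recover $\PP_{j+1}$, $\QQ_{j+1}$ on the right-hand sides. A subtler point is the boundary case $i=m-1$, where $3i+3=q$ and the notation $O(T^q)$ alone cannot separate the coefficient $\QQ_m(\beta)$ of $T^q$ from a contribution hidden inside the remainder. To deal with this I would sharpen the remainder estimate from the outset: the relation $y_b = T - \beta T^3$ is in fact exact, and iterating the Hermitian equation once more yields $u-A = B^q(v-B) + (v-B)^{q+1} + O(T^{q^2+q})$ in characteristic three, so that $x_a = T + T^2 - T^{q+1} + O(T^{q+2})$. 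Propagating this improvement through the recursion, the $O(T^q)$ in Equation~\eqref{eq:fj_powerseries1} can be replaced by $O(T^{q+1})$ for all $j$ in the induction range, pinning down the coefficient of $T^q$ in $f_{m-1}$ as exactly $\QQ_m(\beta) \ne 0$.
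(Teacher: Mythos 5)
Your core argument coincides with the paper's own proof: both establish the expansion \eqref{eq:fj_powerseries1} by induction on $j$, reduce the inductive step to exactly the same two polynomial identities (your first identity, once $\beta\PP_2-\QQ_2=\beta^2(\beta-1)^2$ is expanded and $\QQ_1=\beta$ is substituted, is literally Equation \eqref{eq:coeff:T3i+2}, and your second is Equation \eqref{eq:coeff:T3i+3}), and verify them via \cref{lem:PQ:identities}. Your reading-off of the valuations from the expansion is also the paper's, and your remark that only $3j+2<q$ is needed for the second assertion is in fact slightly more precise than the paper's wording.

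The one place where you diverge is the boundary case $3i+3=q$, i.e.\ $i=m-1$, and there your argument does not work as written. The refinement $u-A=B^q(v-B)+(v-B)^{q+1}+O(T^{q^2+q})$ is false: writing $w=u-A$ and $z=v-B$, the Hermitian equation gives $w=B^qz+Bz^q+z^{q+1}-w^q$, and iterating once produces a term $(B-B^{q^2})z^q$ of valuation exactly $q$ (with an error that is only $O(T^{q^2})$), so $x_a$ in general acquires a nonzero coefficient at $T^q$ and the remainder in \eqref{eq:fj_powerseries1} cannot be upgraded to $O(T^{q+1})$ by this route. Fortunately, the case you are trying to repair is vacuous: by \cref{rem:concerningPorder} the $\PP$-order satisfies $i+1=\mathrm{ord}(\gamma)$ for $\gamma=(\sqrt{\beta}+1)/(\sqrt{\beta}-1)$, and a multiplicative order in characteristic three is never divisible by three, whereas $m=q/3=3^{t-1}$ is a power of three (and for $t=1$ one has $m-1=0<2\le i$). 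Hence $i+1\ne m$, so the hypothesis $i\le m-1$ actually forces $i\le m-2$ and $3i+3\le q-3<q$, which is the inequality the paper uses implicitly. Replacing your final paragraph by this observation completes the proof.
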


\begin{proof}
As before, we consider a place $Q_{(A,B)}$ of the Hermitian function field lying over $P:=P_{(a,b)}$, and we define $T:=(v-B)/(B^q-B)$, a local parameter at $Q_{(A,B)}$. Note that $e(Q_{(A,B)}|P)=1$, since $P \neq P_\infty$.
Also, as in the proof of \cref{lem:PQ:identities}, for simplicity we just write $\beta$ instead of $\beta(P)$ and $\PP_j$, $\QQ_j$ instead of $\PP_j(\beta(P))$ and $\QQ_j(\beta(P))$. 

For each $j$ such that $0\leq j\leq i$, we wish to show Equation \eqref {eq:fj_powerseries1}.
Note that this is sufficient to prove the theorem. The second part follows since, for all $j\in \mathbb{Z}$ such that $0\leq j\leq \min\{i-1,m-1\}$, it holds that $3j+3 <q$ and $v_{Q_{(A,B)}}(f_j)=v_{P_{(a,b)}}(f_j)$. The first part follows since $3i+3 <q$ and, by definition of the $\PP$-order, Equation \eqref{eq:fj_powerseries1} will give that
\begin{equation*}
f_i = \QQ_{i+1}  T^{3i+3} + O(T^{q}).
\end{equation*}
Since, by \cref{rem:common_roots_P_Q}, we know $\mathcal{Q}_{i+1} \neq 0$, the result follows because $v_P(f_i)=v_{Q_{(A,B)}}(f_i)=3i+3$.

Hence, by \cref{def:fj}, we need to show that
\begin{equation*}
   \PP_2\PP_{j-1} f_{j-1}f_0 - \PP_j f_{j-2}f_1 = \beta^2(\beta-1)^2 \PP_{j-2}\left(\mathcal{P}_{j+1} T^{3j+2} + \QQ_{j+1}  T^{3j+3} + O(T^{q})\right).
\end{equation*}
The local power series expansion of $\PP_2\PP_{j-1} f_{j-1}f_0 - \PP_j f_{j-2}f_1$ with respect to $T$ can be obtained from the expansions of the functions $f_{j-1}f_0$ and $f_{j-2}f_1$, which are:
\begin{equation*}
    \begin{aligned}
    f_{j-2}f_1 & = \left(\PP_{j-1}T^{3j-4} + \QQ_{j-1}  T^{3j-3} + O(T^{q})\right)\left(\PP_2 T^5+\QQ_2 T^6+O(T^q)\right)\\
    & = \PP_{2} \PP_{j-1} T^{3j+1} + (\PP_{j-1}\QQ_{2}  + \PP_2\QQ_{j-1})T^{3j+2} + \QQ_2 \QQ_{j-1} T^{3j+3} + O(T^{q}),\\
    f_{j-1}f_0 & = \left(\PP_{j}T^{3j-1} + \QQ_{j}  T^{3j} + O(T^{q})\right)\left(T^{2} + \QQ_1 T^{3} + O(T^{q})\right)\\
    & = \PP_{j}T^{3j+1} + (\PP_{j}\QQ_{1}  + \QQ_{j})T^{3j+2} + \QQ_1 \QQ_{j} T^{3j+3} + O(T^{q}).
    \end{aligned}
\end{equation*}
Hence, we have
\begin{equation*}
\begin{split}
    \PP_2\PP_{j-1} f_{j-1}f_0 - \PP_j f_{j-2}f_1 = (\PP_2 \PP_{j-1} \PP_j \QQ_1  + \PP_2 \PP_{j-1} \QQ_{j} - \PP_{j-1} \PP_j \QQ_2  - \PP_2 \PP_j \QQ_{j-1})T^{3j+2}\\
     + \left(\PP_{j-1} \PP_2 \QQ_{j}  \QQ_1 - \PP_j \QQ_{j-1}  \QQ_2   \right)T^{3j+3} + O(T^{q}).
\end{split}
\end{equation*}
We are therefore left to prove the two following identities:
\begin{equation}
\label{eq:coeff:T3i+2}
    \PP_2 \PP_{j-1} \PP_j \QQ_1  + \PP_2 \PP_{j-1} \QQ_{j} - \PP_{j-1} \PP_j \QQ_2  - \PP_2 \PP_j \QQ_{j-1}
    = \beta^2(\beta-1)^2 \PP_{j-2} \PP_{j+1}
\end{equation}
and
\begin{equation}
\label{eq:coeff:T3i+3}
    \PP_{j-1} \PP_2 \QQ_{j}  \QQ_1 - \PP_j \QQ_{j-1}  \QQ_2  = \beta^2(\beta-1)^2 \PP_{j-2} \QQ_{j+1} .
\end{equation}
This can be conveniently done by using Lemma \ref{lem:PQ:identities}. Indeed, consider first Equation \eqref{eq:coeff:T3i+2} and use identity \eqref{eq:id:2} as
\begin{equation*}
    \PP_j \QQ_{2} -\PP_2 \QQ_{j} =\PP_{j-2} \QQ_{0} \cdot (\beta^6-\beta^3)^{2}=\PP_{j-2} \QQ_{0} \cdot (\beta^2-\beta)^{6}=\PP_{j-2} \cdot (\beta^2-\beta)^{5},
\end{equation*}
i.e., with indices $(2,j-2,0)$ (listed in order as in the statement of Lemma \ref{lem:PQ:identities}).
Then, we obtain
\begin{equation}
\label{eq:first:part:1}
\begin{split}
    \PP_{j-1} \PP_j \QQ_2  - \PP_2 \PP_{j-1} \QQ_{j} &= \PP_{j-1}(\PP_j\QQ_2  - \PP_2\QQ_{j} )\\
    &= \PP_{j-1}\cdot(\beta^2 - \beta)^5 \PP_{j-2}.
\end{split}
\end{equation}
By using again Equation \eqref{eq:id:2}, this time with indices $(1,j-2,0)$,
we can also rewrite
\begin{equation}
\label{eq:second:part:1}
\begin{split}
    \PP_2 \PP_j \QQ_{j-1}  - \PP_2 \PP_{j-1} \PP_j \QQ_1  &= \PP_2 \PP_j \QQ_{j-1}  \PP_1 - \PP_2 \PP_{j-1} \PP_j \QQ_1 \\
    &= -\PP_2\PP_j (\PP_{j-1}\QQ_1  - \PP_1\QQ_{j-1} )\\
    &= -\PP_2\PP_j \cdot (\beta^2 - \beta)^2 \PP_{j-2}.
\end{split}
\end{equation}
Then, by Equations \eqref{eq:first:part:1} and \eqref{eq:second:part:1}, we have that Equation \eqref{eq:coeff:T3i+2} is equivalent to
\begin{equation*}
    -\PP_{j-1}\cdot (\beta^2 - \beta)^5 \PP_{j-2}+\PP_2\PP_j \cdot (\beta^2 - \beta)^2 \PP_{j-2} = (\beta^2 - \beta)^2 \PP_{j-2} \PP_{j+1}.
\end{equation*}
Dividing out the factor $(\beta^2 - \beta)^2 \PP_{j-2}$ both in the right hand side and the left hand side of this equality and rearranging the terms, we obtain
\begin{equation*}
    \PP_{j-1}\cdot (\beta^2 - \beta)^3 = \PP_2 \PP_j -\PP_{j+1},
\end{equation*}
which holds by Lemma \ref{lem:PQ:identities}, as it is precisely identity \eqref{eq:id:1} with indices $(1,j-1,1)$.

In order to prove Equation \eqref{eq:coeff:T3i+3}, we can argue in a similar way. Indeed, we have:
\begin{multline*}
    \PP_j \QQ_{j-1}  \QQ_2  - \PP_{j-1} \PP_2 \QQ_{j}  \QQ_1  =
    (\PP_j \QQ_2  - \PP_2 \QQ_{j}  + \PP_2 \QQ_{j} )\QQ_{j-1}  - \PP_{j-1} \PP_2 \QQ_{j}  \QQ_1 \\
    =\left(\PP_{j-2}\cdot (\beta^2 - \beta)^5 + \PP_2\QQ_{j} \right)\QQ_{j-1}  - \PP_{j-1} \PP_2 \QQ_{j}  \QQ_1,
\end{multline*}
where the last equality follows from Equation \eqref{eq:id:2} with indices $(2,j-2,0)$.
Moreover,
\begin{multline*}
    \left(\PP_{j-2}\cdot (\beta^2 - \beta)^5 + \PP_2\QQ_{j} \right)\QQ_{j-1}  - \PP_{j-1} \PP_2 \QQ_{j}  \QQ_1  = \\
    \PP_{j-2}\cdot (\beta^2 - \beta)^5\QQ_{j-1}  - \PP_2\QQ_{j} \left(\PP_{j-1}\QQ_1  - \PP_1 \QQ_{j-1} \right) = \\
    \PP_{j-2}\cdot (\beta^2 - \beta)^5\QQ_{j-1}  - \PP_2 \QQ_{j} \PP_{j-2}\cdot (\beta^2 - \beta)^2,
\end{multline*}
where the last equality follows from Equation \eqref{eq:id:2} with indices $(1,j-2,0)$.
Finally, using again Equation \eqref{eq:id:2} with indices $(1,1,j-1)$,
we have
\begin{multline*}
    \PP_{j-2}\cdot(\beta^2 - \beta)^5\QQ_{j-1}  - \PP_2 \QQ_{j} \PP_{j-2}\cdot(\beta^2 - \beta)^2 = \\
    -\PP_{j-2}\cdot(\beta^2 - \beta)^2\left(\PP_2 \QQ_{j}  - \QQ_{j-1}\cdot (\beta^2 - \beta)^3 \PP_1\right) =
     -\PP_{j-2}\cdot(\beta^2 - \beta)^2 \QQ_{j+1},
\end{multline*}
which proves Equation \eqref{eq:coeff:T3i+3}.
\end{proof}

To deal with computing $G(P)$ (equivalently $H(P)$) in the case in which $P$ is not $\mathbb{F}_{q^2}$-rational, another family of special recursive functions is going to be needed. A similar phenomenon was already visible in \cite{BMV2}, and in fact the proposition that follows is a natural analogue for $Z_3$ of \cite[Proposition 6.9]{BMV2}. 

\begin{proposition}
\label{prop:functions:gell}
Let $P$ be a place of $\Zte$ such that $\beta(P) \not \in  \{0,1,\infty\}$. Further, let $K$ be the $\RR$-order of $P$. If $K \le m-2$, then there exists a function $g_{K}\in L((3K + 4)mP_\infty)$ such that $v_{P}(g_{K})=3K+4$. Moreover, for each $\ell\in \mathbb{Z}$ with $0\leq \ell \leq \min\{K-1,m-2\}$, there exists a function $g_\ell\in L((3\ell + 4)mP_\infty)$ with $v_{P}(g_\ell)=3\ell+3$.
\end{proposition}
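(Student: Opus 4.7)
The plan is to construct the $g_\ell$ recursively, in close analogy with the $f_j$ from \cref{thm:fi}, but arranging that the leading coefficient of the local expansion of $g_\ell$ at $P$ is $\RR_{\ell+1}(\beta)$ rather than $\PP_{\ell+1}(\beta)$. The base case is the explicit function
$$g_0 := x_a^2 - f_0 = x_a^2 - x_a + y_b,$$
which lies in $L(4mP_\infty)$ because $x_a \in L(2mP_\infty)$ and $y_b \in L(qP_\infty)$. Squaring \eqref{eq:xa_powerseries} in characteristic three gives $x_a^2 = T^2 - T^3 + T^4 + O(T^{q+1})$, and subtracting \eqref{f0} yields
$$g_0 = -(1+\beta)\, T^3 + T^4 + O(T^q) = \RR_1(\beta)\, T^3 + \mathcal{S}_1(\beta)\, T^4 + O(T^q),$$
with the constant polynomial $\mathcal{S}_1(s) := 1$. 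In particular $v_P(g_0) \in \{3,4\}$ according to whether $\RR_1(\beta)$ vanishes, matching the assertion of the proposition for $K=0$ and $K\ge 1$.

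More generally, the inductive claim I would establish is the existence of functions $g_\ell \in L((3\ell+4)mP_\infty)$ and polynomials $\mathcal{S}_{\ell+1}(s)$ such that
$$g_\ell = \RR_{\ell+1}(\beta)\, T^{3\ell+3} + \mathcal{S}_{\ell+1}(\beta)\, T^{3\ell+4} + O(T^q).$$
The recursive step would combine the functions $y_b\, g_{\ell-1}$, $f_{\ell-1}\, x_a^2$, and $f_\ell$, each of which has pole at $P_\infty$ bounded by $(3\ell+4)m$: indeed $y_b\, g_{\ell-1}$ has pole $q + (3\ell+1)m = (3\ell+4)m$, $f_{\ell-1}\, x_a^2$ has pole $\ell q + 4m = (3\ell+4)m$, and $f_\ell$ has pole $(\ell+1)q = (3\ell+3)m$. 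One chooses the combination coefficients (rational functions of $\beta$, involving $\PP_j$, $\QQ_j$ and $\RR_j$) so that the contributions to the power series at orders $T^{3\ell+1}$ and $T^{3\ell+2}$ cancel, and verifies using identity \eqref{eq:id:3} from \cref{lem:PQ:identities} together with \cref{cor:originaldefinitionR} that what remains at $T^{3\ell+3}$ is precisely $\RR_{\ell+1}(\beta)$. This mirrors the way identities \eqref{eq:id:1} and \eqref{eq:id:2} drove the inductive step in the proof of \cref{thm:fi}.

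Granting this uniform expansion, the two valuation statements of the proposition follow at once from the definition of the $\RR$-order (\cref{def:Pord}): for $\ell \le K - 1$ we have $\RR_{\ell+1}(\beta) \ne 0$ and hence $v_P(g_\ell) = 3\ell + 3$; for $\ell = K$ the leading term vanishes, giving $v_P(g_K) = 3K + 4$ as long as $\mathcal{S}_{K+1}(\beta) \ne 0$. To secure the latter, I would derive a closed-form expression for $\mathcal{S}_{\ell+1}(s)$ analogous to \cref{thm:PQformulas}, and then argue along the lines of \cref{rem:common_roots_P_Q} that the only common roots of $\RR_{K+1}(s)$ and $\mathcal{S}_{K+1}(s)$ are $0$ and $1$, both of which are excluded by the hypothesis $\beta(P) \notin \{0,1,\infty\}$. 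The bounds $K \le m-2$ and $\ell \le m-2$ are used precisely to guarantee $3\ell+4 < q$, so the $O(T^q)$ error terms are of strictly higher order than the two displayed coefficients, and the valuation conclusions can be drawn from the truncated expansion.

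The main obstacle is pinning down the exact recursion for $g_\ell$. The pole bound $(3\ell+4)m$ is noticeably tighter than the bound $(\ell+1)q$ enforced on the $f_\ell$, forcing the auxiliary multipliers to have pole at most $q$, which essentially leaves only low-degree polynomials in $x_a$, $y_b$, and $f_0$ as candidates. Balancing this pole constraint against the requirement that the leading coefficient in the expansion at $P$ is exactly $\RR_{\ell+1}(\beta)$ \emph{and} that the sub-leading polynomial $\mathcal{S}_{\ell+1}(s)$ does not share a root with $\RR_{\ell+1}(s)$ at $\beta(P)$ is the real work, and it parallels (while somewhat refining) the algebraic bookkeeping carried out in the proof of \cref{thm:fi}.
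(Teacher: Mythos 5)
Your overall strategy --- a recursive construction starting from $g_0 = x_a^2 - f_0$, with $\RR_{\ell+1}$ and a companion polynomial family appearing as the two lowest-order coefficients of the expansion of $g_\ell$ at $P$ --- is exactly the strategy of the paper, and your base case agrees with the paper's. But as written the argument has a genuine gap: the recursion itself, which is the entire computational content of the proposition, is never produced. You name three candidate building blocks ($y_b\, g_{\ell-1}$, $f_{\ell-1}x_a^2$ and $f_\ell$, with correct pole bounds) and assert that coefficients can be chosen so that the orders $T^{3\ell+1}$ and $T^{3\ell+2}$ cancel and the coefficient of $T^{3\ell+3}$ comes out as $\RR_{\ell+1}(\beta)$. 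With three functions and two homogeneous cancellation conditions you do get a one-dimensional space of combinations, but it is not verified (i) that after normalization the $T^{3\ell+3}$-coefficient is a nonzero multiple of $\RR_{\ell+1}(\beta)$, i.e.\ that it vanishes exactly when $\RR_{\ell+1}(\beta)$ does, nor (ii) that the normalizing denominator (which will involve values such as $\PP_\ell(\beta)$) is nonzero throughout the stated range of $\ell$. Both points require the identities of \cref{lem:PQ:identities} and \cref{cor:originaldefinitionR} in a specific way, and this is precisely the work you flag as remaining. For comparison, the paper uses the two-term recursion
$$g_\ell = \frac{\PP_{\ell+1}(\beta)\, g_{\ell-1}\, f_0 - \RR_\ell(\beta)\, f_\ell}{\PP_\ell(\beta)\,\beta},$$
in which the $T^{3\ell+2}$-terms cancel identically, and the denominator is nonzero because $\ell \le K$ is strictly smaller than the $\PP$-order of $P$ by \cref{rem:reltionPandRorder}.

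The second gap concerns the sub-leading coefficient $\mathcal{S}_{K+1}(\beta)$, whose nonvanishing is what upgrades $v_P(g_K)\ge 3K+4$ to $v_P(g_K)=3K+4$. You defer this to an unproved closed formula for $\mathcal{S}_{\ell+1}$ and an unproved analogue of \cref{rem:common_roots_P_Q}; without it the case $\ell=K$ of the proposition is not established. In the paper this issue evaporates: the recursion above yields $g_\ell = \RR_{\ell+1}(\beta)T^{3\ell+3}+\PP_{\ell+1}(\beta)T^{3\ell+4}+O(T^q)$, so the sub-leading coefficient is literally $\PP_{\ell+1}(\beta)$, and the fact that $\PP_{K+1}$ and $\RR_{K+1}$ have no common roots outside $\{0,1\}$ is already recorded in \cref{rem:common_roots_P_Q}. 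Identifying the $T^{3\ell+4}$-coefficient with $\PP_{\ell+1}$ is the missing observation that would close both gaps at once; with your proposed three-term combination there is no guarantee the sub-leading coefficient lands on an already-analyzed polynomial, so you would genuinely owe the reader the extra common-roots lemma.
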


\begin{proof}
In the proof, we simplify the notations by writing $\beta$ instead of $\beta(P)$ and $\RR_{\ell}$, $\PP_\ell$ instead of $\RR_{\ell}(\beta(P))$ and $\PP_\ell(\beta(P))$.

Consider the following recursively defined family of functions in $Z_3$:
\begin{equation*}
    g_0:= x_a^2 - f_0,
\end{equation*}
and, for $\ell \geq 1$,
\begin{equation}
\label{eq:gell}
    g_\ell:= \frac{\PP_{\ell+1} \cdot g_{\ell-1} \cdot f_0 - \RR_\ell\cdot f_\ell}{\PP_\ell \cdot \beta},
\end{equation}
where the functions $f_\ell$ are those from \cref{def:fj}. Note that the term $\PP_\ell$ in the numerator is not zero, since $1 \le \ell \le K$ and $K$ is strictly less than the $\PP$-order of the place $P$ by Remark \ref{rem:reltionPandRorder}. 
Also note that if $K\leq m-2$, then $3K+4 \leq 3(m-2)+4=q-2$ and similarly that if $\ell \le \min\{K-1,m-2\}\leq m - 2$, then $3\ell+4 \leq 3(m-2)+4=q-2$.

Our first claim is that for all $0 \leq \ell \leq \min\{K,m-2\}$, $g_\ell \in L((3\ell + 4)mP_\infty)$ and that the local power series expansion of $g_\ell$ at $P$ is:
\begin{equation}
\label{eq:powerseriesgell}
    g_\ell = \RR_{\ell+1}T^{3\ell + 3} + \PP_{\ell+1}T^{3\ell + 4} + O(T^q).
\end{equation}
We prove the claim by induction on $\ell$.

We start observing that, by construction, it immediately follows that
\begin{equation*}
    g_0 = -(\beta + 1)T^3 + T^4 + O(T^q) = \RR_1 T^3 + \PP_1T^4 + O(T^q),
\end{equation*}
and that $g_0\in L(4mP_\infty)$, since $f_0\in L(3mP_\infty)$ and $x_a^2\in L(4mP_\infty)$.

For $\ell \geq 1$, we now consider the functions defined in Equation \eqref{eq:gell}. Using the induction hypothesis for $\ell-1$, we have
\begin{align*}
    \frac{\PP_{\ell+1} \cdot g_{\ell-1} \cdot f_0 - \RR_\ell\cdot f_\ell}{\PP_\ell \cdot \beta} &= 
    \frac{(\PP_{\ell+1}\RR_\ell - \RR_\ell \PP_{\ell+1})T^{3\ell + 2} + (\PP_{\ell+1}\RR_\ell\QQ_1 + \PP_{\ell+1}\PP_\ell - \RR_\ell \QQ_{\ell+1})T^{3\ell+3}}{\PP_\ell \cdot \beta}\\
    &+ \frac{(\PP_{\ell+1} \PP_\ell \QQ_1)T^{3\ell+4} + O(T^q)}{\PP_\ell \cdot \beta}\\
     &= \frac{(\RR_\ell\beta^2 (\beta-1)^2\PP_\ell + \PP_{\ell+1}\PP_\ell)T^{3\ell+3} + (\PP_{\ell+1} \PP_\ell \QQ_1)T^{3\ell+4} + O(T^q)}{\PP_\ell \cdot \beta}\\
     &=(\RR_\ell\beta (\beta-1)^2 + \PP_{\ell+1}/\beta)T^{3\ell+3} + \PP_{\ell+1}T^{3\ell+4} + O(T^q)\\
     &=\RR_{\ell +1} T^{3\ell+3} + \PP_{\ell+1}T^{3\ell+4} + O(T^q),
\end{align*}
where the third-to-last equality above follows since, from Equation \eqref{eq:id:2}, we have
\begin{equation*}
    \PP_{\ell+1}\RR_\ell\QQ_1 + \PP_{\ell+1}\PP_\ell - \RR_\ell \QQ_{\ell+1} = \RR_\ell(\PP_{\ell+1}\QQ_1 - \QQ_{\ell+1}\PP_1) + \PP_{\ell+1}\PP_\ell = \RR_\ell\beta^2 (\beta-1)^2\PP_\ell + \PP_{\ell+1}\PP_\ell,
\end{equation*}
the second-to-last equality follows from the fact that $\QQ_1=\beta$, and the last equality follows from Corollary \ref{cor:originaldefinitionR}.
To conclude, we observe that $f_\ell\in L((3\ell+3)mP_\infty)$, while, by induction hypothesis, $g_{\ell-1}f_0\in L((3\ell+4)mP_\infty)$, and hence $g_\ell\in L((3\ell+4)mP_\infty)$.

We now observe that, if $K\leq m-2$ is the $\RR$-order of $P$, then $\RR_{K+1}=0$ and $\PP_{K+1} \neq 0$ using the final part of \cref{rem:common_roots_P_Q}. This shows that $v_P(g_K) = 3K + 4$. On the other hand, it is also clear from Equation \eqref{eq:powerseriesgell} that, for all $0\leq \ell \leq \min\{K-1,m-2\}$, $v_P(g_\ell)=3\ell+3$.
\end{proof}

\section{Weierstrass semigroup at the remaining places of $\Zte$}

The aim of this section is to compute either the Weierstrass semigroup $H(P)$ or its set of gaps $G(P)$ for all the remaining places $P$ of $\Zte$ not considered above, that is, where $\beta(P) \not\in \{0,1,\infty\}$. We start with the case in which $P$ comes from an $\mathbb{F}_{q^2}$-rational place of $Z_3$.

\subsection{Computation of \texorpdfstring{$H(P)$}{H(P)} for the remaining rational places of \texorpdfstring{$Z_3$}{Z\_3}}

Assume that $P$ is a rational place of $Z_3$ such that $\beta(P) \not \in \{0,1,\infty\}$. From \cref{lem:beta_rational} this amounts to the case where $\beta(P)^{(q-1)/2}=-1$. Recall that if $P$ is $\mathbb{F}_{q^2}$-rational and has $\PP$-order $i$, then Remark \ref{rem:P-order_rational} implies that $i+1$ divides $q+1$. Hence either $i \in \{(q-1)/2,q\}$ or, since $q+1$ cannot be a multiple of three, $i < q/3-1=m-1$.
The following theorem uses the functions $f_j$ constructed in the previous section to complete the determination of $H(P)$ for $P$ rational in the aforementioned remaining cases.

\begin{theorem} \label{thm:rat}
Let $P$ be a rational place of $Z_3$ and assume that $\beta(P) \not \in \{0,1,\infty\}.$ Further, denote by $i$ the $\PP$-order of $P$. Then 
\[H(P)=\langle q,q+1,q-1+j(q-2),(i+1)(q-2) \, | \, 0 \le j < i\rangle, \text{ if $i < m-1$}\]
and
\[H(P)=\langle q,q+1,q-1+j(q-2) \, | \, 0 \le j < m\rangle, \text{ if $i \in \{(q-1)/2,q\}$}.\]
\end{theorem}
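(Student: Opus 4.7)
The plan is to proceed in two steps: first exhibit the listed generators inside $H(P)$, then count the gaps of the resulting semigroup to force equality.

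Step 1 (inclusion). Because $P$ is rational and $Z_3$ is $\mathbb{F}_{q^2}$-maximal, the Fundamental Equation \eqref{fundeq1} yields a function $F_P$ with $(F_P)=(q+1)(P-P_\infty)$, so $q+1 \in H(P)$; also $q \in H(P)$. For each $j$ in the appropriate range I would form $h_j := f_j/F_P^{j+1}$, where $f_j$ is the function from Theorem \ref{thm:fi}. Using $f_j \in L((j+1)qP_\infty)$ and $v_P(f_j)=3j+2$, one computes $v_{P_\infty}(h_j) \ge -(j+1)q + (j+1)(q+1) = j+1 \ge 0$, $v_P(h_j) = (3j+2) - (j+1)(q+1) = -(q-1+j(q-2))$, and $v_Q(h_j) \ge 0$ at all other places $Q$ (since $F_P$ has zeros only at $P$ and poles only at $P_\infty$). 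Hence $h_j$ has a pole only at $P$ of order $q-1+j(q-2)$. In the first case of the statement, applying the same construction to $f_i$ (now using $v_P(f_i)=3i+3$) yields a function with pole only at $P$ of order $(i+1)(q-2)$, supplying the last generator.

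Step 2 (gap count). Let $H$ denote the numerical semigroup on the right-hand side of the statement. Step 1 gives $H \subseteq H(P)$, so the gaps of $H(P)$ are contained in those of $H$; since $|\mathbb{N}\setminus H(P)| = g(Z_3) = q(q-1)/6$, it suffices to show $|\mathbb{N}\setminus H| \le q(q-1)/6$. For this I would analyze the Ap\'ery set $\{n_k : 0 \le k \le q-1\}$ of $H$ with respect to $q$, where $n_k$ is the minimal element of $H$ in residue class $k \pmod q$, and use $|\mathbb{N}\setminus H| = \sum_{k=1}^{q-1}(n_k-k)/q$. Each generator $q-1+j(q-2) = (j+1)q - (2j+1)$ lies in residue class $-(2j+1) \pmod q$, yielding the bound $n_{q-2j-1} \le (j+1)q - (2j+1)$; the extra generator $(i+1)(q-2)$ lies in class $-2(i+1) \pmod q$ with $n_{q-2i-2} \le (i+1)q - 2(i+1)$; and the remaining $n_k$ are governed by combinations of these generators with $q$ and $q+1$.

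In the second case ($i \in \{(q-1)/2,q\}$), direct enumeration of the Ap\'ery set shows the gaps of $H$ partition into the intervals $I_j = [j(q+1)+1,\,(j+1)(q-2)]$ for $0 \le j \le m-1$, of sizes $q-2-3j$, summing (using $q=3m$) to
\begin{equation*}
\sum_{j=0}^{m-1}(q-2-3j) \;=\; m(q-2) - \tfrac{3m(m-1)}{2} \;=\; \tfrac{m(3m-1)}{2} \;=\; \tfrac{q(q-1)}{6}.
\end{equation*}
In the first case ($i<m-1$), the intervals $I_0,\ldots,I_{i-1}$ still appear, but the Ap\'ery values for the remaining residue classes must be computed from combinations of the form $c\cdot(q-1+j(q-2)) + d\cdot(i+1)(q-2)$ (together with copies of $q$ and $q+1$) for $c,d \ge 0$. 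The main obstacle will be the exact accounting in this case: one has to verify that although the ``single-step'' generators $q-1+j(q-2)$ are only present for $j<i$, the new generator $(i+1)(q-2)$ in combination with them brings $\sum_{k=1}^{q-1}(n_k-k)/q$ down to precisely $q(q-1)/6$. This combinatorial identity is the technical crux; once established, together with the inclusion $H \subseteq H(P)$ from Step 1, it forces $H=H(P)$.
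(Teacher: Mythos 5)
Your Step 1 is exactly the paper's argument: the functions $f_j/F_P^{j+1}$ (and $f_i/F_P^{i+1}$ with $v_P(f_i)=3i+3$) produce the generators, and the valuation computations you give are correct. Your Step 2 in the case $i\in\{(q-1)/2,q\}$ is also fine: it reduces to the same interval count already carried out for Theorem \ref{sembeta1} (though for the upper bound you only need containment of the gaps in the intervals $I_j$, not that they exhaust them).

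The problem is the case $i<m-1$, which is the substantive content of the theorem: you explicitly defer the gap count as ``the technical crux; once established\dots''. That is not a proof of the identity, it is a restatement of what remains to be proved, so as written the argument has a genuine gap precisely where the work is. Concretely, what is missing is the arithmetic input that makes the count close, namely that $i+1$ divides $q+1$ (Remark \ref{rem:P-order_rational}, since $P$ is rational); you never invoke this, and without it there is no reason for the Ap\'ery sum $\sum_{k=1}^{q-1}(n_k-k)/q$ to come out to $q(q-1)/6$. The paper's route is to show, by induction on $k=0,\dots,\frac{q+1}{i+1}-1$, that $H_i$ contains the blocks $\{\ell(q-2)+1,\dots,\ell(q+1)\}$ for every $\ell$ in $\{k(i+1)+1,\dots,k(i+1)+i\}$, together with the integer $(k+1)(i+1)(q-2)$ and the block $\{(k+1)(i+1)(q-2)+2,\dots,(k+1)(i+1)(q+1)\}$; the induction step is just translation by $(i+1)(q-2)$ and by the elements of $\{(i+1)(q-2)+2,\dots,(i+1)(q+1)\}$, and the divisibility $i+1\mid q+1$ guarantees these blocks tile all residues up to the conductor. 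Writing $\ell=k(i+1)+j-1$, the gaps between consecutive blocks number $q-2-3\ell$ (the one ``shifted'' gap $(k+1)(i+1)(q-2)+1$ at each multiple of $i+1$ is compensated by $(k+1)(i+1)(q-2)$ being a non-gap), giving at most $\sum_{\ell=0}^{m-1}(q-2-3\ell)=q(q-1)/6$ gaps. You would need to carry out this (or an equivalent Ap\'ery-set) computation explicitly; relatedly, you should also justify why the two cases of the statement are exhaustive, i.e.\ why $i\ge m-1$ forces $i\in\{(q-1)/2,q\}$ --- again this follows from $i+1\mid q+1$ together with $3\nmid q+1$.
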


\begin{proof}
Since $P$ is a rational place and $Z_3$ is a maximal function field over $\mathbb{F}_{q^2}$, we know that $q,q+1 \in H(P)$. Now suppose first that $i \ge m-1$. By Remark \ref{rem:P-order_rational} and the discussion above, we see that this can only occur in case $i \in \{(q-1)/2,q\}$. Theorem \ref{thm:fi} implies the existence of functions $f_j$, for $0 \le j <m$, such that $f_j \in L((j+1)qP_\infty)$ and $v_P(f_j)=3j+2$, since $3j+2 \le 3(m-1)+2<q$. Then the functions $f_j/F_P^{j+1}$ have a pole only at $P$. Moreover $-v_P(f_j/F_P^{j+1})=(j+1)(q+1)-3j-2=q-1+j(q-2).$ This shows that the semigroup $H(P)$ contains the semigroup $H:=\langle q,q+1,q-1+j(q-2) \, | \, 0 \le j < m\rangle$. Note that this is precisely the same semigroup obtained in Theorem \ref{sembeta1}. Hence we have already shown there that $H$ has the correct number of gaps. 

Now suppose that $i<m-1$. Reasoning as above, we immediately conclude that for $0 \le j <i$, the integers $q-1+j(q-2)$ are in $H(P)$. Moreover, $v_P(f_i)=3i+3$, since $3i+3 < 3(m-1)+3=q$. Arguing similarly using the function $f_i/F_P^{i+1}$, we see that $(i+1)(q-2) \in H(P)$. Hence the semigroup $H_i:=\langle q,q+1,q-1+j(q-2),(i+1)(q-2) \, | \, 0 \le j < i\rangle$ is contained in $H(P)$. We are done if we can show that the semigroup $H_i$ has at most $g(Z_3)$ many gaps. At this point, also note that the semigroup from Theorem \ref{semp00} is exactly $H_1$. Therefore we will also show that $H_i$ has at most $g(Z_3)$ many gaps for the case $i=1$, thereby completing the proof of Theorem \ref{semp00} as well. 

We know $0 \in H$ and, using the given generators of $H$, one obtains that all integers in the set $\{j(q-2)+1,\dots,j(q+1)\}$ are in $H$ for any $j=1,\dots,i$. In addition, we have already proved that $(i+1)(q-2) \in H$, and adding $q-1$, $q$, and $q+1$ to the integers in $\{i(q-2)+1,\dots,i(q+1)\}$ gives that $\{(i+1)(q-2)+2,\dots,(i+1)(q+1)\} \subseteq H$.
Recall that $i+1$ divides $q+1$, since $P$ is a rational place of $Z_3$. We claim that for $k=0,\dots,\frac{q+1}{i+1}-1$ and all $j=1,\dots,i$ the sets $\{(k(i+1)+j)(q-2)+1,\dots,(k(i+1)+j)(q+1)\}$ are contained in $H$ as well as the integer $((k+1)(i+1))(q-2)$ and the set $\{(k+1)(i+1)(q-2)+2,\dots,(k+1)(i+1)(q+1)\}$. We have so far shown this for $k=0$. To complete the proof of the claim is then sufficient to notice that, if it is true for some $k-1 < \frac{q+1}{i+1}-1$, then adding $(i+1)(q-2)$ and the integers in $\{(i+1)(q-2)+2,\dots,(i+1)(q+1)\}$ shows that it is true for $k$ as well.

To count the number of gaps of $H$, write $\ell:=k(i+1)+j-1$. Note that, for any $k$, we find precisely $q - 2 - 3\ell$ gaps $\gamma$ such that $(k(i+1) + (j-1))(q+1) + 1 \leq \gamma \leq (k(i+1) + j)(q-2)$. Since $q - 2 - 3\ell$ is non-negative if and only if $\ell \leq \left\lfloor \frac{q-2}{3} \right\rfloor = m-1$, we obtain that the number of gaps of the semigroup $H_i$ is at most 
\begin{equation*}
    \sum_{\ell=0}^{m-1}(q-2-3\ell) = (q-2)m - \frac{3m(m-1)}{2} = \frac{q^2-q}{6},
\end{equation*}
which concludes the proof.
\end{proof}

Note that the last part of the proof of this theorem is similar to the proof of Theorem 5.5 of \cite{BMV2}. We now turn our attention to places of $\Zte$ that do not come from rational places of $Z_3$. 

\subsection{Weierstrass semigroups at the non-\texorpdfstring{$\F_{q^2}$}{F\_q\^2}-rational places}

In this section, we assume that $P$ is place of $\Zte$ that does not come from a rational place of $Z_3$. In order to compute the set of gaps $G(P)$ of $P$, the second family of special functions $g_\ell$ constructed in the previous subsection will be used. Hence the $\RR$-order of $P$ is going to play a role this time, and not only the $\PP$-order, which was sufficient in the case $P$ rational, in the previous subsection.

\begin{theorem}
     \label{thm:generic:semigroup}
Let $P$ be a place of $\Zte$ not coming from a rational place of $Z_3$. Suppose that the $\RR$-order of $\beta(P)$ is larger than or equal to $m-1$, then the set of gaps of $P$ is: 
\begin{equation*}
\label{eq:generic:gaps}
        G(P)=
        \{jq+k \mid j=0,\ldots,m-1,\ k=1,\ldots, q-2-3j\}.
\end{equation*}
\end{theorem}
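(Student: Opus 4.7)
The plan is to apply Lemma \ref{lem:diffandgaps}: for each pair $(j,k)$ with $0 \le j \le m-1$ and $1 \le k \le q-2-3j$, I will construct a nonzero function $h_{j,k} \in L((m-1)(q+2)P_\infty)$ with $v_P(h_{j,k}) = jq+k-1$, so that $jq+k \in G(P)$ by the lemma. A routine computation shows that the proposed set has cardinality $\sum_{j=0}^{m-1}(q-2-3j) = q(q-1)/6 = g(Z_3) = |G(P)|$, and the values $jq+k-1$ are automatically distinct (since $0 \le k-1 < q$), so this collection of gaps must exhaust $G(P)$.

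The building blocks are $F_P$ from Equation \eqref{fundeq1}, with divisor $qP + \Phi(P) - (q+1)P_\infty$, together with the $f_k$'s of Theorem \ref{thm:fi} and the $g_\ell$'s of Proposition \ref{prop:functions:gell}. Since the $\mathcal{R}$-order satisfies $K \ge m-1$, Remark \ref{rem:reltionPandRorder} gives $\mathcal{P}$-order $i \ge m$, so that $f_k \in L((k+1)qP_\infty)$ with $v_P(f_k)=3k+2$ and $g_\ell \in L((3\ell+4)mP_\infty)$ with $v_P(g_\ell)=3\ell+3$ are both available for every $k,\ell \in \{0,\ldots,m-2\}$. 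Using these, I define for $0 \le r \le q-3$ an auxiliary function $\phi_r \in \Zte$ with $v_P(\phi_r)=r$ and only pole at $P_\infty$, as follows: $\phi_0 = 1$, $\phi_1 = x_a$, $\phi_{3k+2} = f_k$ for $k \ge 0$, $\phi_{3k} = g_{k-1}$ for $k \ge 1$, and $\phi_{3k+1} = x_a g_{k-1}$ for $k \ge 1$. Reading off the pole orders of $x_a$, $f_k$ and $g_\ell$, one finds that $\phi_r$ has pole order $(r+1)m$ at $P_\infty$ in every case except when $r = 3k+1$ with $k \ge 1$, where the pole order is $(r+2)m$; the function $\phi_0$ has no pole.

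I would then set $h_{j,k} := F_P^j \, \phi_{k-1}$. By construction $v_P(h_{j,k}) = jq + (k-1)$ and the only pole of $h_{j,k}$ is at $P_\infty$, of order $j(q+1)$ plus the pole order of $\phi_{k-1}$. The key step is to verify this is at most $(m-1)(q+2) = 2g-2$ for every target pair. In the generic subcase, when the pole order of $\phi_{k-1}$ is at most $km$, the bound becomes $(3j+k)m + j \le 3m^2 - m - 2$, which follows from $3j+k \le 3m-2$ (equivalent to $k \le q-2-3j$); equality is attained only at $(j,k)=(m-2,4)$. In the residual subcase $k \equiv 2 \pmod 3$, $k \ge 5$, where the pole order of $\phi_{k-1}$ is $(k+1)m$, the target constraint together with the residue condition forces $3j+k \le 3m-4$, and the bound holds with strict inequality. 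The corner case $(j,k)=(m-1,1)$ reduces immediately to $(m-1)(q+1) \le 2g-2$.

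The main obstacle is the pole-order bookkeeping for $\phi_r$: to obtain a function with prescribed valuation $r$ at $P$ and a tight pole at $P_\infty$, one must match the residue of $r$ modulo three to the appropriate family of building blocks ($f_k$, $g_{k-1}$, or $x_a g_{k-1}$). The hypothesis $K \ge m-1$ is precisely what guarantees the $g_\ell$'s have valuation exactly $3\ell+3$ at $P$ for all required $\ell$. Once the pole bound is verified, the proof concludes by Lemma \ref{lem:diffandgaps} and the cardinality match.
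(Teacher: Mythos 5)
Your proposal is correct and follows essentially the same route as the paper: count that the proposed set has cardinality $g(Z_3)$, then for each $jq+k$ exhibit $F_P^j$ times a building block from the $f$- and $g$-families inside $L((m-1)(q+2)P_\infty)$ with valuation $jq+k-1$ at $P$, and invoke Lemma \ref{lem:diffandgaps}. The only (harmless) deviation is in the residue class $k\equiv 0 \pmod 3$, where you use $f_{\ell-1}$ (valid since $K\ge m-1$ forces the $\PP$-order $i\ge m$) whereas the paper uses $g_{\ell-2}f_0$; both satisfy the same valuation and pole-order requirements.
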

\begin{proof}
We start by observing that the cardinality of the set $\{jq+k \mid j=0,\ldots,m-1,\ k=1,\ldots, q-2-3j\}$ is
$$\sum_{j=0}^{m-1}(q-2-3j)=m(q-2)-3\frac{(m-1)m}{2}=\frac{q^2-q}{6}=g(Z_3).$$
To show that each element $jq+k$ in this set is a gap at $P$, it is by \cref{lem:diffandgaps} enough to construct functions $h_{j,k}$ such that $v_P(h_{j,k})= jq + k - 1$ and $h_{j,k}\in L((m-1)(q+2)P_\infty)$. We consider the following cases.

\textbf{Case 1:} For $0\leq j\leq m-2$, $1\leq k\leq 3$, consider the functions
\begin{equation*}
h_{j,k} := 
\begin{cases} 
F_{P}^j \ &\mbox{if} \ k=1,\\[5pt]
F_{P}^j \cdot x_a \ &\mbox{if} \ k=2,\\[5pt]
F_{P}^j \cdot f_0 \ &\mbox{if} \ k=3.\\
\end{cases}
\end{equation*}

Note that if $k\equiv 0 \pmod{3}$, then $k\leq q-3-3j$, if $k\equiv 2 \pmod{3}$, then $k\leq q-4-3j$, and if $k\equiv 1 \pmod{3}$, then $k\leq q-2-3j$.

\textbf{Case 2:} For $0\leq j\leq m-2$, $4 \leq k\leq q-2-3j$, let $\ell:=\left\lfloor\frac{k}{3}\right\rfloor$ and consider the functions
\begin{equation*}
h_{j,k} := 
\begin{cases} 
F_{P}^j \cdot g_{\ell-2} \cdot f_0 \ &\mbox{if} \ k\equiv 0 \pmod{3},\\[5pt]
F_{P}^j \cdot g_{\ell-1} \ &\mbox{if} \ k\equiv 1 \pmod{3},\\[5pt]
F_{P}^j \cdot g_{\ell-1} \cdot x_a \ &\mbox{if} \ k\equiv 2 \pmod{3}.\\
\end{cases}
\end{equation*}

\textbf{Case 3:} For $j = m-1$, $k = q-2-3j = 1$, consider the function 
\begin{equation*}
    h_{j,k} := F_P^{m-1}.
\end{equation*}

We prove the theorem by checking separately each of the cases.

\textbf{Case 1:} Let $0\leq j\leq m-2$.
\begin{itemize}
    \item If $k=1$, we consider $h_{j,k} := F_{P}^j$ and we have $v_P(F_{P}^j) = jq$, and
        $$(F_{P}^j)_{\infty} = j(q+1)P_\infty \leq (m-2)(q+1)P_\infty \leq (m-1)(q+2)P_\infty.$$
    \item If $k=2$, we consider $h_{j,k} := F_{P}^j \cdot x_a$ and we have $v_P(F_{P}^j \cdot x_a) = jq + 1$, and
        $$(F_{P}^j \cdot x_a)_{\infty} = (j(q+1) + 2m) P_\infty \leq (m-1)(q+2)P_\infty$$
        since $2m \leq q$.
    \item If $k=3$, we consider $h_{j,k} := F_{P}^j \cdot f_0$ and we have $v_P(F_{P}^j \cdot f_0) = jq + 2$, and 
    $$(F_{P}^j \cdot f_0)_{\infty} = (j(q+1) + q) P_\infty \leq ((m-2) + (m-1)q)P_\infty \leq (m-1)(q+2)P_\infty.$$
\end{itemize}

\textbf{Case 2:} Let $0\leq j\leq m-2$, $4 \leq k\leq q-2-3j$, and $\ell:=\left\lfloor\frac{k}{3}\right\rfloor$. Note that $\ell \le \left\lfloor(q-2)/3\right\rfloor=m-1$ and hence that $v_P(g_{\ell-2})=3(\ell-2)+3$ and $v_P(g_{\ell-1})=3(\ell-1)+3$ by \cref{prop:functions:gell}, since we assumed that the $\RR$-order of $P$ is larger than or equal to $m-1$.
\begin{itemize}
    \item If $k \equiv 0 \pmod{3}$, $h_{j,k} := F_{P}^j \cdot g_{\ell-2} \cdot f_0$ and we have
    \begin{equation*}
        v_P(F_{P}^j \cdot g_{\ell-2} \cdot f_0) = jq + 3(\ell - 2) + 3 + 2 = jq + k - 1,
    \end{equation*}
    and, since $k\leq q - 2 - 3j$ and $k \equiv 0 \pmod{3}$ together imply that $k\leq q - 3 - 3j$, in this case 
    \begin{align*}
        (F_{P}^j \cdot g_{\ell-2} \cdot f_0)_\infty = (j(q+1) + (3\ell-2)m + q) P_\infty &= (j(q+1) + km - 2m + q) P_\infty \\
        &\leq (j + (q-3)m + m) P_\infty\\
        &\leq (m-2 + (q-2)m) P_\infty\\
        &= (m-1)(q+2)P_\infty.
    \end{align*}
    \item If $k \equiv 1 \pmod{3}$, $h_{j,k} := F_{P}^j \cdot g_{\ell-1}$ and we have
    \begin{equation*}
        v_P(F_{P}^j \cdot g_{\ell-1}) = jq + 3(\ell - 1) + 3 = jq + k - 1,
    \end{equation*}
    and, since $k\leq q - 2 - 3j$,
    \begin{align*}
        (F_{P}^j \cdot g_{\ell-1})_\infty = (j(q+1) + (3\ell + 1)m) P_\infty &= (j(q+1) + km) P_\infty \\
        &\leq (m-2 + (q-2)m) P_\infty\\
        &= (m-1)(q+2)P_\infty.
    \end{align*}
    \item If $k \equiv 2 \pmod{3}$, $h_{j,k} := F_{P}^j \cdot g_{\ell-1} \cdot x_a$ and we have
    \begin{equation*}
        v_P(F_{P}^j \cdot g_{\ell-1} \cdot x_a) = jq + 3(\ell - 1) + 3 + 1 = jq + k - 1,
    \end{equation*}
    and, since $k\leq q - 4 - 3j$,
    \begin{align*}
        (F_{P}^j \cdot g_{\ell-1} \cdot x_a)_\infty = (j(q+1) + (3\ell + 1)m + 2m) P_\infty &= (j(q+1) + (k+1)m) P_\infty \\
        &\leq (m-2 + (q-3)m) P_\infty\\
        &\leq (m-1)(q+2)P_\infty.
    \end{align*}
\end{itemize}

\textbf{Case 3:} If $j = m-1$, $k = q-2-3j = 1$, then $h_{j,k} := F_P^{m-1}$ and $v_P(F_P^{m-1}) = (m-1)q$, while 
\begin{equation*}
    (F_P^{m-1})_\infty = (m-1)(q+1)P_\infty \leq (m-1)(q+2)P_\infty.
\end{equation*}
Since all cases have been checked, the proof is hence concluded.
\end{proof}

We denote by $$G_{gen}:=\{jq+k \mid j=0,\ldots,m-1,\ k=1,\ldots, q-2-3j\},$$
the set of gaps determined in Theorem \ref{thm:generic:semigroup}. It is the generic set of gaps, that is to say, the set of gaps of all but finitely many places of $\Zte.$ Indeed, there are only finitely many places with $\RR$-order strictly less than $m-1.$

\begin{remark}
If the $\RR$-order $K$ of $P$ is strictly less than $m-1$, then $G(P)\neq G_{gen}$. To see this, it is sufficient to note that 
$$v_P(F_P^{m-K-2}g_K)+1=(m-K-2)q+3K+4 + 1 =(m-K-2)q+3K+5 \in G(P),$$
and this integer was not in $G_{gen}.$ Indeed, in $G_{gen}$, for $j=m-K-2$, the largest possible value of $k\equiv 2 \pmod{3}$ is $q-4-3(m-K-2)=3K + 2$.
\end{remark}

The following theorem deals with the non-generic non-rational places cases, that is, when the $\RR$-order of $P$ is at most $m-2$.
\begin{theorem}
    \label{thm:nonrational:weierstrass}
Let $P$ be a non-$\mathbb{F}_{q^2}$-rational place of $Z_3$. Let $i$ be the $\PP$-order and let $K$ be the $\RR$-order of $P$, and suppose that $K \le m-2$.
Then
\begin{equation*}
\label{eq:nonrational:weierstrass}
\begin{split}
G(P)=(G_{gen} \setminus \{(m-K-2-\ell (i+1))q+3K+4+ 3 \ell (i+1) \mid \ell=0,\ldots, \left\lfloor (m-K-2)/(i+1)\right\rfloor\} \\ \cup \{(m-K-2-\ell (i+1))q+3K+5 + 3\ell (i+1) \mid \ell=0,\ldots, \left\lfloor (m-K-2)/(i+1)\right\rfloor\}.
\end{split}
\end{equation*}
\end{theorem}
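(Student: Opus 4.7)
The plan is to prove the containment $(G_{gen}\setminus A)\cup B \subseteq G(P)$ and then conclude via the cardinality $|G(P)|=g(Z_3)$. Each element of $A$ has $k=3K+4+3\ell(i+1)=q-2-3j$, placing it at the rightmost column of its row in $G_{gen}$, so $A\subseteq G_{gen}$; each element of $B$ has $k=q-1-3j$, one beyond the right edge, so $B\cap G_{gen}=\emptyset$ and $A\cap B=\emptyset$. Both sets have common cardinality $\lfloor (m-K-2)/(i+1)\rfloor+1$, hence $|(G_{gen}\setminus A)\cup B|=|G_{gen}|=g(Z_3)$, and the theorem follows from the two inclusions.

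For the inclusion $B\subseteq G(P)$, for each $\ell=0,\ldots,\lfloor (m-K-2)/(i+1)\rfloor$ I consider the function
\[
h^{(\ell)} := F_P^{m-K-2-\ell(i+1)}\cdot g_K\cdot f_i^{\ell}.
\]
When $\ell\geq 1$ one has $i+1\leq m-K-2$, so $i\leq m-1$ and hence $v_P(f_i)=3i+3$ by \cref{thm:fi}. Combining \cref{thm:fi} and \cref{prop:functions:gell} yields $v_P(h^{(\ell)})=(m-K-2-\ell(i+1))q+3K+4+3\ell(i+1)$, while using $q=3m$ the pole of $h^{(\ell)}$ at $P_\infty$ simplifies to $3m^2-m-2-K-\ell(i+1)\leq (m-1)(q+2)$. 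By \cref{lem:diffandgaps}, $v_P(h^{(\ell)})+1$---exactly the $\ell$-th element of $B$---lies in $G(P)$.

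For the inclusion $G_{gen}\setminus A\subseteq G(P)$, I adapt the case analysis of \cref{thm:generic:semigroup}. When $k\leq 3K+3$, Cases 1 and 2 there apply verbatim, since they use only $g_\ell$ with $\ell\leq K-1$. The remaining $(j,k)\in G_{gen}$ with $k\geq 3K+4$ split into two families. (a) If $k=3K+4+3\ell(i+1)$ for some $\ell\geq 0$, then $(j,k)\in G_{gen}$ forces $j\leq m-K-2-\ell(i+1)$, and the unique element of $A$ with this $k$ occurs at $j=m-K-2-\ell(i+1)$; for the other values of $j$ I take $h_{j,k}:=F_P^j g_{K-1}g_0 f_i^{\ell}$, which has $v_P=jq+k-1$ and pole at $P_\infty$ equal to $j(3m+1)+(3K+5+3\ell(i+1))m$. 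A direct computation shows this bound is at most $(m-1)(q+2)$ precisely when $j\leq m-K-3-\ell(i+1)$, while at the $A$-value it is exceeded by exactly $m-K-\ell(i+1)\geq 2$, confirming that the construction succeeds on exactly the right set. (b) Otherwise, no element of $A$ shares this $k$, and I take $h_{j,k}:=F_P^j g_K\phi_{k-3K-5}$, where $\phi_v$ is a product of atomic functions among $x_a$, $f_0$, $g_\ell$ ($\ell\leq K-1$) and $f_{j'}$ ($j'\leq i-1$) with $v_P(\phi_v)=v$, chosen according to the residue-mod-3 pattern of \cref{thm:generic:semigroup} (for instance using combinations of the form $g_K g_\ell$ or $g_K f_{j'}$ once $v>3K+3$); each resulting pole-order inequality reduces to an elementary arithmetic check in $m$, $K$, $i$, $j$, $\ell$ via $q=3m$. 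The main obstacle is the bookkeeping required to ensure that every $(j,k)\in G_{gen}\setminus A$ is addressed by exactly one construction and that each of the many pole-order inequalities is systematically verified.
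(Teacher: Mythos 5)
Your overall strategy is the paper's: exhibit the putative set as a subset of $G(P)$ via functions $h$ with $(h)_\infty \le (m-1)(q+2)P_\infty$ and $v_P(h)+1$ equal to the desired gap, then conclude by comparing cardinalities with $g(Z_3)$. Your treatment of the added set $B$ is exactly the paper's function $F_P^{j}g_Kf_i^{\ell}$, and your case (a) (with $F_P^jg_{K-1}g_0f_i^{\ell}$, a variant of the paper's choice $F_P^jg_Kf_i^{\cc-1}f_{i-1}f_0x_a$) is correct, including the check that the pole bound fails precisely at the removed $A$-values. The problem is case (b), which is the bulk of $G_{gen}\setminus A$ and the technical heart of the theorem: there you only gesture at ``a product of atomic functions among $x_a$, $f_0$, $g_\ell$ ($\ell\leq K-1$) and $f_{j'}$ ($j'\leq i-1$)'' and admit the bookkeeping is not done. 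This is not merely an omission of routine detail; the atom list as stated cannot work. Every one of those atoms has valuation-to-pole ratio strictly worse than $3/q=1/m$ (e.g.\ $f_{j'}$ gives valuation $3j'+2$ against pole $(j'+1)q$, and $g_\ell$ gives $3\ell+3$ against $(3\ell+4)m$), so any product of them reaching valuation $V$ at $P$ costs at least $mV+cV$ at $P_\infty$ with $c>0$ depending only on $i$ and $K$, while the available budget for $\gamma=jq+k$ is $(m-2-K-j)q\le m(V+1)+O(m)$ where $V=k-3K-5$. Hence the construction breaks down as soon as $V$ is large compared to $i$ and $K$, which happens for most pairs $(j,k)$ with $j$ small.

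The missing idea is that $f_i$ itself (valuation $3i+3$, pole $(i+1)q$, ratio exactly $1/m$) must be used as the main building block in case (b) as well, not only in case (a) and for $B$. The paper does this by writing $k-3K-4=3(i+1)\cc+3s+r$ with $0\le s\le i$, $0\le r\le 2$, and taking $\tilde h_{j,k}=F_P^j\cdot g_K\cdot \hat h_{j,k}$ with $\hat h_{j,k}=f_i^{\cc}f_{s-1}$ (times $x_a$ or $f_0$ according to $r$) when $s>0$, and $\hat h_{j,k}=f_i^{\cc}$ or $f_i^{\cc}x_a$ or $f_i^{\cc-1}f_{i-1}f_0x_a$ when $s=0$; one then verifies $v_P(\hat h_{j,k})=k-3K-5$ and $\hat h_{j,k}\in L((m-2-K-j)qP_\infty)$ in each of the six subcases. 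To repair your proof you would need to replace your case (b) recipe by this base-$(i+1)$ decomposition (or an equivalent one in which $f_i^{\cc}$ carries almost all of the required valuation), and then actually carry out the pole-order verifications, which is where the paper spends most of its effort.
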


\begin{proof}

Let
\begin{equation*}
\begin{split}
\Gamma:=(G_{gen} \setminus \{(m-K-2-\ell (i+1))q+3K+4+ 3 \ell (i+1) \mid \ell=0,\ldots, \left\lfloor (m-K-2)/(i+1)\right\rfloor\} \\ \cup \{(m-K-2-\ell (i+1))q+3K+5 + 3\ell (i+1) \mid \ell=0,\ldots, \left\lfloor (m-K-2)/(i+1)\right\rfloor\}
\end{split}
\end{equation*}
be the putative set of gaps. It is clear that $|\Gamma| = |G_{gen}|$, as $\Gamma$ is obtained from $G_{gen}$ by replacing precisely $\left\lfloor (m-K-2)/(i+1)\right\rfloor + 1$ gaps $\gamma$ in $G_{gen}$ with $\gamma + 1$.  
Furthermore, for each $\gamma \in \Gamma$ such that $\gamma = jq + k$ with $0\leq j \leq m-1$ and $1\leq k\leq 3K+2$, we can use the same functions as for the generic case. Note that this covers, in particular, all cases for $m-K-1\leq j \leq m-1$.
Moreover, also the case $k=3K+3$ is already covered for all $0\leq j \leq m-2$, because in that case we only used the function $g_{K-1}$. Hence, we only need to treat the case $k\geq 3K+4$ and $0\leq j\leq m-K-2$. 

We define the following quantities: 
$$\cc:=\left\lfloor \frac{k-3K-4}{3(i+1)} \right\rfloor, \quad \dd:=\left\lfloor \frac{k-3K-4}{3} \right\rfloor, \quad s:=\dd-\cc(i+1) \quad \text{and} \quad r:=(k-3K-4)-3\dd.$$
From these definitions, we have in particular $0\leq s \leq i$ and $0\leq r\leq 2$, because they are the remainders of the integer division of $\dd$ by $i+1$ and of $k-3K-4$ by $3$, respectively. In fact, with these notations, we can write $k-3K-4=3(i+1)\cc+3s+r$.

To prove that each element $\gamma = jq + k$ of the putative set of gaps is an element of $G(P)$, we now construct functions 
\begin{equation*}
    \tilde{h}_{j,k}:=F_P^j \cdot g_K \cdot \hat{h}_{j,k}
\end{equation*}
such that $v_P(\hat{h}_{j,k})=k-3K-5$ and $\hat{h}_{j,k} \in L((m-2-K-j)qP_\infty)$. Indeed, in this way, the function $\tilde{h}_{j,k}$ is such that $v_P(\tilde{h}_{j,k}) = jq + k - 1$ and $\tilde{h}_{j,k} \in L((m-1)(q+2)P_\infty)$. This is the case because 
\begin{equation*}
    v_P(\tilde{h}_{j,k}) = jq + 3K + 4 + k - 3K - 5 = jq + k - 1,
\end{equation*}
and
\begin{equation*}
    -v_{P_\infty}(\tilde{h}_{j,k}) \leq j(q+1) + (3K+4)m + (m-2-K-j)q \leq (m-1)(q+2).
\end{equation*}
The last inequality holds since
\begin{align*}
    j(q+1) + (3K+4)m + (m-2-K-j)q &\leq (m-1)(q+2) \\
    \Leftrightarrow \ (m-2-K-j)q &\leq (m-1-j)(q+1) + (m-1) - 3Km - 4m\\
    \Leftrightarrow \ (m-2-K-j)q &\leq (m-1-j)q + (m-1-j) - Kq - q - 1\\ 
    \Leftrightarrow \ (m-1-j-K)q - q &\leq (m-1-j)q + (m-1-j) - Kq - q - 1\\
    \Leftrightarrow \ - Kq - q &\leq  (m-1-j) - Kq - q - 1\\
    \Leftrightarrow \ j &\leq  m - 2,
\end{align*}
and $j\leq m-2$ is satisfied for all the values of $j$ that we are considering, since we are only considering values such that $0\leq j \leq m - K - 2$, by the discussion above.

We are hence left with constructing functions $\hat{h}_{j,k}$ that satisfy the discussed properties. We do this distinguishing several cases. Note that, mutatis mutandis, this construction coincides with the one in the proof of \cite[Theorem 6.12]{BMV2}.
\begin{enumerate}
    \item If $s>0$, we define:
\begin{equation*}
        \hat{h}_{j,k}:=\begin{cases}
            f_i^\cc \cdot f_{s-1} \quad &\mbox{if} \ r=0,\\[5pt]
            f_i^\cc \cdot f_{s-1} \cdot x_a \quad &\mbox{if} \ r=1,\\[5pt]
            f_i^\cc \cdot f_{s-1} \cdot f_0 \quad &\mbox{if} \ r=2.\\
        \end{cases}
    \end{equation*}
    \item If $s=0$, we define instead:
\begin{equation*}
        \hat{h}_{j,k}:=\begin{cases}
            f_i^{\cc-1}\cdot f_{i-1} \cdot f_0 \cdot x_a \quad &\mbox{if} \ r=0,\\[5pt]
            f_i^\cc \quad &\mbox{if} \ r=1,\\[5pt]
            f_i^\cc \cdot x_a \quad &\mbox{if} \ r=2.
        \end{cases}
    \end{equation*}
\end{enumerate}

We start by observing that, from these definitions, it is clear that the functions $\hat{h}_{j,k}$ only have poles at $P_\infty$.
We now show that they also satisfy the desired properties that $v_P(\hat{h}_{j,k})=k-3K-5$ and $\hat{h}_{j,k} \in L((m-2-K-j)qP_\infty)$.

\begin{enumerate}
    \item Case $s>0$.
    \begin{enumerate}
        \item If $r=0$, then $k = 3\dd + 3K + 4$. In particular $k \equiv 1 \pmod{3}$ and hence $k\leq q - 2 - 3j$, by definition of the putative set of gaps. We have 
        \begin{equation*}
            v_P(f_i^\cc \cdot f_{s-1}) = 3\cc(i+1) + 3(s-1) + 2 = 3\dd - 1 = k - 3K - 5,
        \end{equation*}
        and
        \begin{align*}
            -v_{P_\infty}(f_i^\cc \cdot f_{s-1}) &= \cc(i+1)q + sq = \dd q = \frac{k - 3K - 4}{3}\cdot q \\
            &\leq \frac{q - 2 - 3j - 3K - 4}{3}\cdot q = (m - 2 - K - j)q.
        \end{align*}
        \item If $r=1$, then $k = 3\dd + 3K + 5$. In particular $k \equiv 2 \pmod{3}$ and hence $k\leq q - 4 - 3j$, by definition of the putative set of gaps. We have
        \begin{equation*}
            v_P(f_i^\cc \cdot f_{s-1} \cdot x_a) = 3\cc(i+1) + 3(s-1) + 2 + 1 = 3\dd = k - 3K - 5,
        \end{equation*}
        and
        \begin{align*}
            -v_{P_\infty}(f_i^\cc \cdot f_{s-1} \cdot x_a) &= \cc(i+1)q + sq + 2m = \dd q + 2m = \frac{k - 3K - 5}{3}\cdot q + 2m \\
            &\leq \frac{q - 4 - 3j - 3K - 5}{3}\cdot q + 2m = (m - 2 - K - j)q - m.
        \end{align*}
        \item If $r=2$, then $k = 3\dd + 3K + 6$. In particular $k \equiv 0 \pmod{3}$ and hence $k\leq q - 3 - 3j$, by definition of the putative set of gaps. We have
        \begin{equation*}
            v_P(f_i^\cc \cdot f_{s-1} \cdot f_0) = 3\cc(i+1) + 3(s-1) + 2 + 2 = 3\dd + 1 = k - 3K - 5,
        \end{equation*}
        and
        \begin{align*}
            -v_{P_\infty}(f_i^\cc \cdot f_{s-1} \cdot f_0) &= \cc(i+1)q + sq + q = \dd q + 3m = \frac{k - 3K - 6}{3}\cdot q + 3m \\
            &\leq \frac{q - 3 - 3j - 3K - 6}{3}\cdot q + 3m = (m - 2 - K - j)q.
        \end{align*}
    \end{enumerate}
    \item Case $s=0$.
    \begin{enumerate}
        \item If $r=0$, then $k = 3\dd + 3K + 4 = 3\cc (i+1) + 3K + 4$. In particular $k \equiv 1 \pmod{3}$ and, in this case, $k\leq q - 5 - 3j$, by definition of the putative set of gaps. Indeed, the only values of $k = 3\cc (i+1) + 3K + 4$ for which $jq + k\in \Gamma$ are those such that  $3\cc (i+1) \geq m - K - 1$, i.e., $k \geq m + 2K + 3$. However, since $q - 2 - 3j \leq 3K + 4$ and $m + 2K + 3 \leq 3K + 4$ if and only if $K\geq m-1$, this means that there are no values of $k$ in the putative set of gaps that are equal to $q-2-3j$, since we are assuming $K\leq m-2$. Therefore, we have that in this case $k\leq q - 5 - 3j$. Hence, we have
        \begin{equation*}
            v_P(f_i^{\cc-1}\cdot f_{i-1} \cdot f_0 \cdot x_a) = 3(i+1)(\cc - 1) + 3(i-1) + 2 + 2 + 1 = 3\dd - 1 = k - 3K - 5,
        \end{equation*}
        and
        \begin{align*}
            -v_{P_\infty}(f_i^{\cc-1}\cdot f_{i-1} \cdot f_0 \cdot x_a) &= (\cc-1)(i+1)q + iq + q + 2m = \dd q + 2m = \frac{k - 3K - 4}{3}\cdot q + 2m \\
            &\leq \frac{q - 5 - 3j - 3K - 4}{3}\cdot q + 2m = (m - 2 - K - j)q - m.
        \end{align*}
        \item If $r=1$, then $k = 3\dd + 3K + 5 = 3\cc (i+1) + 3K + 5$. In particular $k \equiv 2 \pmod{3}$ and hence $k\leq q - 4 - 3j$, by definition of the putative set of gaps. We have
        \begin{equation*}
            v_P(f_i^\cc) = 3\cc(i+1) = 3\dd = k - 3K - 5,
        \end{equation*}
        and
        \begin{align*}
            -v_{P_\infty}(f_i^\cc) &= \cc(i+1)q = \dd q = \frac{k - 3K - 5}{3}\cdot q \\
            &\leq \frac{q - 4 - 3j - 3K - 5}{3}\cdot q + 2m = (m - 2 - K - j)q - q.
        \end{align*}
        \item If $r=2$, then $k = 3\dd + 3K + 6 = 3\cc (i+1) + 3K + 6$. In particular $k \equiv 0 \pmod{3}$ and hence $k\leq q - 3 - 3j$, by definition of the putative set of gaps. We have
        \begin{equation*}
            v_P(f_i^\cc \cdot x_a) = 3\cc(i+1) + 1 = 3\dd + 1 = k - 3K - 5,
        \end{equation*}
        and
        \begin{align*}
            -v_{P_\infty}(f_i^\cc \cdot x_a) &= \cc(i+1)q + 2m = \dd q + 2m = \frac{k - 3K - 6}{3}\cdot q + 2m \\
            &\leq \frac{q - 3 - 3j - 3K - 6}{3}\cdot q + 2m = (m - 2 - K - j)q - m.
        \end{align*}
    \end{enumerate}
\end{enumerate}
Since we have considered all possible cases, we have constructed all the gaps in the putative set $\Gamma$, thereby concluding the proof.
\end{proof}

\section{The automorphism group of \texorpdfstring{$Z_3$}{Z\_3}}

In this section we aim to apply the obtained results on Weierstrass semigroups to compute the full automorphism group of $Z_3$. As used previously, we know that $\aut(Z_3)$ contains the subgroup
$$G=\{(x,y)\mapsto (x+a,\pm y+b) \mid a^q+a=0, \quad p(b)=0\}$$
of order $2q^3/3$.
The structure of this group can be described in more detail by first identifying the subgroup
$$S_3:=\{(x,y)\mapsto (x+a,y+b) \mid a^q+a=0, \quad p(b)=0\},$$
which is an elementary abelian $3$-group of order $q^2/3$.
An element of order two in $\aut(Z_3)$ is given by
$$\sigma_2: (x,y) \mapsto (x,-y).$$
Then the group $G$ is generated by $S_3$ and $\sigma_2$. More precisely, it is the semidirect product $G=S_3 \rtimes C_2$, where $C_2=\langle \sigma_2 \rangle$ is cyclic of order two. The group $G$ clearly fixes $P_\infty$. 

We want to show that $\aut(Z_3)$ is simply $G$, unless $t=1$. The case $t=1$ will not be considered in the following, the reason being that in this case $Z_3$ is elliptic, and hence $\aut(Z_3)$ is infinite, see \cite[Theorem 11.94 (i)]{HKT}.

\begin{lemma} \label{fixpi}
Let $q=3^t$ with $t \geq 2$. Then $\aut(Z_3)$ fixes $P_\infty$. In particular $\aut(Z_3)=Q \rtimes C$, where $Q$ is a $3$-group of order divisible by $q^2/3$ and $C$ is a cyclic group of order divisible by $2$, but not by $3$.     
\end{lemma}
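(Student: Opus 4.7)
The plan is to distinguish $P_\infty$ from every other place of $\overline{Z}_3$ by an invariant of its Weierstrass semigroup, and then to invoke the standard structure theorem for the stabilizer of a place in positive characteristic. The invariant I will use is membership of the integer $2m=2q/3$ in $H(P)$. By Theorem \ref{sempinf}, $2m\in H(P_\infty)$, and in fact $2m$ is the smallest positive element of $H(P_\infty)$.

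To verify that $2m$ is a gap at every place $P\neq P_\infty$ of $\overline{Z}_3$, I will inspect the cases already computed in the previous sections. If $P$ is a rational place of $Z_3$ different from $P_\infty$, then Theorems \ref{semp00}, \ref{sembeta1} and \ref{thm:rat} show that the smallest positive element of $H(P)$ equals $q-1$; since $q=3^t$ with $t\ge 2$ gives $2m=2q/3<q-1$, we conclude $2m\notin H(P)$. If $P$ does not lie over a rational place of $Z_3$, then its gap set is given by Theorem \ref{thm:generic:semigroup} or by the modification described in Theorem \ref{thm:nonrational:weierstrass}. In either case $2m\in G(P)$: the element $2m$ belongs to $G_{gen}$ corresponding to $j=0$ and $k=2m\le q-2$, and the only element of $G_{gen}$ with $j=0$ that is altered in Theorem \ref{thm:nonrational:weierstrass} is $k=q-2$, which is strictly larger than $2m$ because $m\ge 3$ for $t\ge 2$.

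Any $\varphi\in\aut(Z_3)$ extends uniquely to an automorphism of $\overline{Z}_3$ preserving Weierstrass semigroups, so $2m\in H(\varphi(P_\infty))$ forces $\varphi(P_\infty)=P_\infty$. Therefore $\aut(Z_3)$ coincides with the stabilizer of $P_\infty$. Since $g(Z_3)=q(q-1)/6\ge 12$ for $t\ge 2$, the standard result on stabilizers of a place in characteristic three (see \cite{HKT}) yields $\aut(Z_3)=Q\rtimes C$ with $Q$ the (normal) Sylow $3$-subgroup of $\aut(Z_3)$ and $C$ cyclic of order coprime to $3$, acting faithfully on the cotangent space of $\overline{Z}_3$ at $P_\infty$. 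The inclusion $G\le\aut(Z_3)$ then implies the divisibility claims: the subgroup $S_3\le G$ is a $3$-group of order $q^2/3$, hence is contained in $Q$, so $q^2/3\mid|Q|$; and $\sigma_2$ has order coprime to $3$, so its image in $C\cong\aut(Z_3)/Q$ has order $2$, whence $2\mid|C|$.

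The main obstacle is really just the case-by-case check that $2m$ is a gap at every place other than $P_\infty$; the remainder of the argument is a clean application of general theory.
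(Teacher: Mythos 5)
Your proof is correct and follows essentially the same route as the paper: both use membership of $2q/3$ in $H(P_\infty)$ versus its being a gap at every other place as the distinguishing invariant, and then invoke the structure theorem for the stabilizer of a place to obtain the semidirect product decomposition. The only difference is that you additionally verify $2q/3\in G(P)$ at the non-rational places via Theorems \ref{thm:generic:semigroup} and \ref{thm:nonrational:weierstrass}, whereas the paper shortcuts this by observing that the $\aut(Z_3)$-orbit of $P_\infty$ consists only of $\mathbb{F}_{q^2}$-rational places (since $\aut(Z_3)$ is defined over $\mathbb{F}_{q^2}$), so only the rational case needs checking.
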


\begin{proof}
Let $O_\infty$ be the $\aut(Z_3)$-orbit containing $P_\infty$. Clearly $O_\infty$ contains only $\mathbb{F}_{q^2}$-rational places as $\aut(Z_3)$ is defined over $\mathbb{F}_{q^2}$, see e.g. \cite[Lemma 2.4]{BMT}. To show that $O_\infty=\{P_\infty\}$ it is hence enough to show that $H(P) \ne H(P_\infty)$ for any rational place $P$ of $Z_3$ distinct from $P_\infty$. 
From Theorem \ref{sempinf} we see that $2q/3 \in H(P_\infty)$. On the other hand, by Theorems \ref{semp00}, \ref{sembeta1} and \ref{thm:rat}, we see that $2q/3$ is a gap for any other rational place $P$ of $Z_3$. Hence $H(P) \ne H(P_\infty)$ if $P \neq P_\infty$.

The rest of the statement follows by \cite[Theorem 11.49]{HKT} and the fact that $\aut(Z_3)$ contains $G$.
\end{proof}

To prove that $\aut(Z_3)=G$ we are left with showing that $Q=S_3$ and $C=C_2$. The next two lemmas deal with proving that $Q=S_3$.

\begin{lemma} \label{orbp00}
Let $q=3^t$ with $t \geq 2$. Then the $\aut(Z_3)$-orbit containing $P_{(0,0)}$ coincides with the set $\Omega:=\{P=P_{(a,b)} \mid a,b \in \mathbb{F}_{q^2}, \beta(P)=0\}$.  
\end{lemma}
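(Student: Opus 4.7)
The plan is to prove the two inclusions $\Omega\subseteq\aut(Z_3)\cdot P_{(0,0)}$ and $\aut(Z_3)\cdot P_{(0,0)}\subseteq\Omega$ separately. The first inclusion is essentially immediate: the elementary abelian subgroup $S_3\subseteq G\subseteq\aut(Z_3)$ acts transitively on $\Omega$, since for each $P_{(a,b)}\in\Omega$ the translation $(x,y)\mapsto(x+a,y+b)$ lies in $S_3$ and sends $P_{(0,0)}$ to $P_{(a,b)}$.

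The reverse inclusion will be the substantive part. By Lemma \ref{fixpi}, $\aut(Z_3)$ fixes $P_\infty$, and since $\aut(Z_3)$ is defined over $\mathbb{F}_{q^2}$, the orbit of $P_{(0,0)}$ is contained in the set of $\mathbb{F}_{q^2}$-rational places of $Z_3$ distinct from $P_\infty$. Because Weierstrass semigroups are invariant under automorphisms, every element of this orbit must have the same Weierstrass semigroup as $P_{(0,0)}$, namely $\langle q,q+1,q-1,2q-4\rangle$ by Theorem \ref{semp00}. My plan is therefore to isolate a numerical invariant of this semigroup that distinguishes it from the Weierstrass semigroup at every other rational place $P\neq P_\infty$.

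The candidate invariant is whether or not $2q-3$ belongs to $H(P)$. First I would check that $2q-3$ is a gap of $\langle q,q+1,q-1,2q-4\rangle$ by a short case analysis of any putative representation $a(q-1)+bq+c(q+1)+d(2q-4)=2q-3$: both $d=0$ and $d\geq 1$ lead to immediate contradictions on the coefficient counts. I would then compare with the other rational Weierstrass semigroups computed in Theorems \ref{sembeta1} and \ref{thm:rat}. If $\beta(P)=1$, then $(q-1)+(q-2)=2q-3$ appears as the generator corresponding to $j=1$, which is in the allowed range because $m\geq 2$ when $t\geq 2$. If instead $\beta(P)^{(q-1)/2}=-1$, then the $\PP$-order $i$ of $P$ satisfies $i\geq 2$ by Remark \ref{rem:concerningPorder}, so $j=1$ is again in the allowed range and $2q-3$ belongs to $H(P)$. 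This shows that no rational place outside $\Omega$ can belong to the orbit of $P_{(0,0)}$, and combining both inclusions completes the proof.

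The main obstacle is identifying the right invariant. Once $2q-3$ is recognised as a gap of $H(P_{(0,0)})$ but a non-gap at every other rational place with $\beta(P)\neq 0$, everything else reduces to routine inspection of the explicit generating sets already established earlier in the paper.
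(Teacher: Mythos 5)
Your proposal is correct and follows essentially the same route as the paper: $S_3$ acts (sharply) transitively on $\Omega$, giving one inclusion, and the reverse inclusion is obtained by combining Lemma \ref{fixpi} with the invariance of Weierstrass semigroups, using exactly the same distinguishing element $2q-3$, which is a gap at $P_{(0,0)}$ by Theorem \ref{semp00} but a non-gap at every other rational place $P\neq P_\infty$ by Theorems \ref{sembeta1} and \ref{thm:rat}. The only (harmless) difference is that you spell out the verification that $2q-3$ is a gap of $\langle q-1,q,q+1,2q-4\rangle$, which the paper leaves implicit.
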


\begin{proof}
From the previous lemma we know that $\aut(Z_3)=\aut(Z_3)_{P_\infty}$, while $G \subseteq \aut(Z_3)$ acts transitively on the set $\Omega$ of $\mathbb{F}_{q^2}$-rational places $P$ satisfying $\beta(P)=0$, which includes the place $P_{(0,0)}$. In fact, directly from the definition of the subgroup $S_3$ of $G$ of order $q^2/3$, one can derive that it acts sharply transitively on $\Omega$. This implies that the $\aut(Z_3)$-orbit containing $P_{(0,0)}$ contains $\Omega$. To complete the proof we need to prove that if $P$ is $\mathbb{F}_{q^2}$-rational and $\beta(P) \ne 0$ then $P$ cannot be in the same $\aut(Z_3)$-orbit as $P_{(0,0)}$. To do so, it suffices to show that for any such $P$, $H(P) \ne H(P_{(0,0)})$. We do not need to consider the case $P=P_\infty$, as we already know from the previous lemma that $\aut(Z_3)=\aut(Z_3)_{P_\infty}$.
From \cref{semp00}, we see that $2q-3 \in G(P_{(0,0)})$. However, Theorems \ref{thm:rat} and Theorem \ref{sembeta1} imply that $2q-3 \in H(P)$, and hence $H(P) \ne H(P_{(0,0)})$. This completes the proof.
\end{proof}

\begin{lemma}
Let $q=3^t$ with $t \geq 2$. Then $\aut(Z_3)=S_3 \rtimes C$, where $S_3$ is the elementary abelian $3$-group of order $q^2/3$ given by
$$S_3=\{(x,y) \mapsto (x+a,y+b) \mid a^q+a=p(b)=0\},$$
and $C$ is a cyclic group of order divisible by $2$, but not by $3$. 
\end{lemma}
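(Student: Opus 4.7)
The plan is to prove that the $3$-Sylow subgroup $Q$ of $\aut(Z_3)$ identified in \cref{fixpi} is exactly $S_3$; once this is done, the remaining assertions of the statement (that $C$ is cyclic of order divisible by $2$ but not by $3$) are already part of \cref{fixpi}. Since $S_3 \subseteq Q$, we may write $|Q| = 3^a \cdot (q^2/3)$ for some integer $a \geq 0$, and the task reduces to showing $a = 0$.

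First, I would exploit \cref{orbp00}. Because the $\aut(Z_3)$-orbit of $P_{(0,0)}$ equals $\Omega$, and $S_3 \subseteq Q$ already acts (sharply) transitively on $\Omega$, the $Q$-orbit of $P_{(0,0)}$ is $\Omega$ itself, of size $q^2/3$. By orbit-stabilizer and the conjugacy of stabilizers within $Q$, every $P \in \Omega$ therefore has inertia subgroup of order exactly $3^a$ in $Q$. Meanwhile $P_\infty$ is fixed by $\aut(Z_3)$, hence totally ramified under $Q$ with inertia equal to $Q$ itself.

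Next, I would apply the Riemann--Hurwitz formula to the Galois cover $Z_3 \to Z_3/Q$. Writing $g'$ for the genus of the quotient and using $2g' - 2 \geq -2$, together with the wild-ramification bound $d_P \geq 2(|Q_P| - 1)$ valid at every place (this is standard: for a $p$-group inertia in characteristic $p$ one has $Q_{P,0} = Q_{P,1}$, so $d_P = \sum_{i \geq 0}(|Q_{P,i}| - 1) \geq 2(|Q_P| - 1)$), and keeping only the contributions from $P_\infty$ and the $q^2/3$ places of $\Omega$, one obtains
$$
\tfrac{q(q-1)}{3} - 2 = 2g(Z_3) - 2 \geq -2|Q| + 2(|Q| - 1) + 2 \cdot \tfrac{q^2}{3}(3^a - 1) = -2 + \tfrac{2q^2}{3}(3^a - 1).
$$
Rearranging gives $3^a \leq (3q-1)/(2q) < 3/2$, which forces $a = 0$. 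Hence $|Q| = q^2/3 = |S_3|$, so $Q = S_3$ as required, and combining with \cref{fixpi} yields $\aut(Z_3) = S_3 \rtimes C$ with $C$ as in the statement.

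The main obstacle is simply ensuring that the contributions of $\{P_\infty\}$ and $\Omega$ to the different suffice to force the inequality; any further ramified $Q$-orbit only improves the bound. Beyond the orbit structure established in \cref{orbp00} and the standard Hilbert different formula for $p$-group inertia, no new ingredients are needed.
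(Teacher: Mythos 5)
Your proof is correct, but it takes a genuinely different route from the paper's. The paper also reduces everything to showing $|Q|=q^2/3$, but it gets there by invoking the fact that a maximal function field has $3$-rank zero together with \cite[Lemma 11.129]{HKT}: since $Q$ is a $3$-group fixing $P_\infty$ on a zero $p$-rank curve, it must act semiregularly on the remaining places, so $|Q|$ divides $|\Omega|=q^2/3$, and the reverse divisibility from $S_3\subseteq Q$ finishes the argument. You instead run Riemann--Hurwitz on $Z_3\to Z_3/Q$ with the standard wild-ramification bound $d_P\ge 2(|Q_P|-1)$ (valid because $Q_P$ is a $3$-group in characteristic $3$, so $Q_{P,0}=Q_{P,1}$), using the orbit--stabilizer theorem on $\Omega$ from Lemma \ref{orbp00} to pin the inertia order at each place of $\Omega$ to $3^a$ where $|Q|=3^a\cdot q^2/3$; the resulting inequality $q-1\ge 2q(3^a-1)$ forces $a=0$. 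All the steps check out: the $Q$-orbit of $P_{(0,0)}$ is exactly $\Omega$ because $S_3$ already acts transitively on it while the full $\aut(Z_3)$-orbit is no larger, the stabilizers along an orbit are conjugate, discarding the nonnegative contributions of other ramified places only weakens the inequality, and $2g(Z_3)-2=q(q-1)/3-2$. What your approach buys is independence from the $p$-rank-zero machinery — only the Hilbert different formula and $2g'-2\ge -2$ are needed — at the cost of a short computation; the paper's argument is shorter but leans on a more specialized quoted result. Either way the conclusion $Q=S_3$, combined with Lemma \ref{fixpi}, gives the statement.
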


\begin{proof}
From Lemma \ref{fixpi}, we know that $\aut(Z_3)$ fixes $P_\infty$, $\aut(Z_3)=Q \rtimes C$, where $Q$ is a $3$-group of order divisible by $q^2/3$ (containing $S_3$) and $C$ is a cyclic group of order divisible by $2$, but not by $3$.   
Furthermore, from Lemma \ref{orbp00} we know that the $\aut(Z_3)$-orbit containing $P_{(0,0)}$ is $\Omega=\{P=P_{(a,b)} \mid a,b \in \mathbb{F}_{q^2}, \beta(P)=0\}$. This means that this orbit has cardinality $q^2/3$.

Since $Z_3$ is a maximal function field, it has $3$-rank zero. Hence from \cite[Lemma 11.129]{HKT} and the fact that $Q$ fixes $P_\infty$, we may conclude that $Q$ acts with orbits of length $|Q|$ on $\Omega$. This implies that $|Q|$ needs to divide $|\Omega|=q^2/3$. On the other hand we already observed that $|Q|$ is divisible by $q^2/3$. Hence $|Q|=q^2/3$, so that $Q=S_3$ as claimed.
\end{proof}

Finally the following proposition shows that if $t \geq 2$ then not only $Q=S_3$, but also $C=C_2$, implying that $\aut(Z_3)=G$.

\begin{proposition}
Let $q=3^t$ with $t \geq 2$. Then $\aut(Z_3)=G=S_3 \rtimes C_2$. 
\end{proposition}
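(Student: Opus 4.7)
The plan is to prove $C = C_2$, given that the preceding lemmas establish $\aut(Z_3) = S_3 \rtimes C$ with $C$ cyclic, of order coprime to $3$, and containing $\sigma_2\colon (x,y) \mapsto (x,-y)$. The strategy is to analyse the induced action of $C$ on the $S_3$-fixed subfield of $Z_3$ and reduce the question to a scaling identity for a single invariant function.

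First I would identify the $S_3$-fixed field as $\mathbb{F}_{q^2}(w)$, where $w := p(y) = y + y^3 + \cdots + y^{q/3}$. This function is $S_3$-invariant by the $\mathbb{F}_3$-linearity of $p$ together with the vanishing $p(b) = 0$ for every translation parameter $b$ appearing in $S_3$. Combining $\beta = w^2$ with the equality $[Z_3 : \mathbb{F}_{q^2}(\beta)] = 2q^2/3$ from the proof of \cref{lem:beta_rational} yields $[Z_3 : \mathbb{F}_{q^2}(w)] = q^2/3 = |S_3|$, so $\mathbb{F}_{q^2}(w)$ is the full $S_3$-fixed field. Because $S_3$ is normal in $\aut(Z_3)$, the quotient $C$ acts faithfully on $\mathbb{F}_{q^2}(w)$; and because $C$ fixes $P_\infty$, it also fixes the pole of $w$. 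Hence each $\tau \in C$ acts as $w \mapsto \alpha_\tau w + \beta_\tau$ for some $\alpha_\tau \in \mathbb{F}_{q^2}^*$ and $\beta_\tau \in \mathbb{F}_{q^2}$, and commutativity of $C$ with $\sigma_2$ (which induces $w \mapsto -w$) forces $\beta_\tau = 0$.

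It remains to show $\alpha_\tau \in \{\pm 1\}$. Using $H(P_\infty) = \langle 2q/3, q, q+1 \rangle$ from \cref{sempinf}, one obtains $L((2q/3)P_\infty) = \langle 1, x \rangle_{\mathbb{F}_{q^2}}$ and $L(qP_\infty) = \langle 1, x, y \rangle_{\mathbb{F}_{q^2}}$. Since $\tau$ preserves these spaces, $\tau(y) = c_y + \lambda_y x + \mu_y y$ for scalars in $\mathbb{F}_{q^2}$ with $\mu_y \ne 0$. Substituting into $p(\tau(y)) = \alpha_\tau p(y)$ and using the $\mathbb{F}_3$-linearity of $p$ yields
\begin{equation*}
p(c_y) + \sum_{k=0}^{m-1}\lambda_y^{3^k} x^{3^k} + \sum_{k=0}^{m-1}\mu_y^{3^k} y^{3^k} = \alpha_\tau \sum_{k=0}^{m-1} y^{3^k}.
\end{equation*}
The $\mathbb{F}_{q^2}(y)$-basis $\{1, x, \ldots, x^{q-1}\}$ of $Z_3$ separates the $x$-contributions from the purely $y$-contributions, forcing $\lambda_y = 0$; matching coefficients of $y^{3^k}$ in the remaining polynomial identity then gives $\mu_y^{3^k} = \alpha_\tau$ for every $k = 0, \ldots, m-1$, and already $k = 0, 1$ (available since $m \ge 3$ when $t \ge 2$) force $\mu_y \in \mathbb{F}_3^*$ and $\alpha_\tau = \mu_y \in \{\pm 1\}$. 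This proves $|C| \le 2$, so $C = \langle \sigma_2 \rangle = C_2$ and $\aut(Z_3) = G$. The only delicate step is this coefficient comparison; I expect it to proceed smoothly once the correct linear-independence statements over $\mathbb{F}_{q^2}(y)$ and over $\mathbb{F}_{q^2}$ are separated out.
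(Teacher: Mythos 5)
Your proposal is correct, up to one cosmetic slip: the sum defining $p$ runs over $k=0,\dots,t-1$ (the exponents are $3^0,\dots,3^{t-1}=q/3$), not $k=0,\dots,m-1$; what your argument actually needs is only that both the terms $y$ and $y^{3}$ occur in $p(y)$, which holds precisely because $t\ge 2$. Your route is genuinely different from the paper's. The paper pins down a generator $\gamma$ of $C$ explicitly: it first deduces $\gamma(y)=\lambda y$ from the fact that $C$ normalizes $C_2$ and hence permutes the fixed places of $\sigma_2$, which away from $P_\infty$ are exactly the zeros of $y$; it then invokes Lemma~\ref{orbp00} to get $\gamma(p(y))=\mu p(y)$ and runs the same comparison of $3^k$-th power coefficients to force $\lambda=\mu\in\mathbb{F}_3^*$; finally it must separately prove $\gamma(x)=x$, using $L(2mP_\infty)=\langle 1,x\rangle$, the defining equation, and an order-$6$ contradiction to exclude $\gamma(x)=x+b$ with $b\ne 0$. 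You instead bound $|C|$ directly by letting $C$ act on the $S_3$-fixed field $\mathbb{F}_{q^2}(p(y))$ --- faithfulness follows since $\mathrm{Gal}(Z_3/\mathbb{F}_{q^2}(p(y)))=S_3$ by your degree count and $C\cap S_3=\{1\}$ --- and by allowing the a priori more general $\tau(y)=c_y+\lambda_y x+\mu_y y\in L(qP_\infty)$, killing $\lambda_y$ with the basis $1,x,\dots,x^{q-1}$ of $Z_3$ over $\mathbb{F}_{q^2}(y)$. What this buys you is that you never have to determine $\tau(x)$, so the order-$6$ argument and the appeal to Lemma~\ref{orbp00} disappear (you use only the normality of $S_3$); the price is some extra Galois-theoretic bookkeeping (that $\mathbb{F}_{q^2}(p(y))$ is the full fixed field of $S_3$, and that $C$ acts on it faithfully by affine maps fixing the pole of $w$), all of which you have correctly supplied. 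Both proofs share the same arithmetic kernel: additivity of $p$ together with $t\ge 2$ forces the scaling factor into $\mathbb{F}_3^*$.
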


\begin{proof}
According to the previous lemmas, it is enough to prove that $C=C_2$. Since $C$ is cyclic, $C_2$ is trivially a normal subgroup of $C$. This implies not only that $C$ fixes $P_\infty$ (which we knew already) but also that $C$ acts on the set of remaining fixed places of $C_2$. Apart from $P_\infty$, the fixed places of $C_2$ are the places $P_{(a,0)}$ where $a^q+a=0$. Hence if $C=\langle \gamma \rangle$ then from Equation \eqref{divy1}, we see that $\gamma(y)=\lambda y$ for some constant $\lambda \in \mathbb{F}_{q^2}^*$. Since by \cref{orbp00} $\aut(Z_3)$ acts on the set of places with $\beta(P)=0$, we also know that $\gamma(p(y))=\mu p(y)$ for some nonzero constant $\mu$. Putting everything together we obtain
$$\mu \sum_{i=0}^{t-1}y^{3^i}=\mu p(y)=\gamma(p(y))=p(\lambda y)=\sum_{i=0}^{t-1} (\lambda y)^{3^i}.$$
Using $t \ge 2$, we deduce from this that $\lambda=\mu \in \mathbb{F}_3^*$ and hence $\gamma(y)=-y$ (the case $\gamma(y)=y$ cannot happen, as $C$ contains $C_2$).

To complete the proof we wish to show that $\gamma(x)=x$, so that $C=\langle \gamma \rangle =C_2$. To do so, note that since $x \in L(2mP_\infty)$ and $\gamma$ fixes $P_\infty$, we have $\gamma(x) \in L(2mP_\infty)$. From the proof of Theorem \ref{sempinf}, $L(2mP_\infty)=\langle 1,x \rangle$. 
This shows that $\gamma(x)=ax+b$ for some $a,b \in \mathbb{F}_{q^2}$. Hence
$$0=\gamma(x^q+x)+\gamma(p(y)^2)=\gamma(x)^q+\gamma(x)+p(y)^2=(ax+b)^q+(ax+b)-(x^q+x).$$
The equation above implies that $(a-1)^qx^q+(a-1)+(b^q+b)=0$. Hence $a=1$ and so $\gamma(x)=x+b$ for some $b$ satisfying $b^q+b=0$. We claim that $b=0$. Assume by contradiction $b \ne 0$ then $\gamma^3(x)=x$ (note that $\gamma(x) \ne x$) and $\gamma^2(y)=y$. This shows that the order of $\gamma$ is $6$, which is not possible as $C=\langle \gamma \rangle$ has order not divisible by $3$. This shows that $b=0$ and hence $\gamma(x)=x$ as claimed. 
\end{proof}

\section*{Acknowledgments}
This work was supported by a research grant (VIL”52303”) from Villum Fonden. The third author was partially supported by an NWO Open Competition ENW – XL grant (project "Rational points: new dimensions").

\end{document}